\documentclass[final,onefignum,onetabnum]{siamart171218}

\usepackage{enumitem,kantlipsum}
\usepackage{subeqnarray}
\usepackage{stmaryrd}
\usepackage{lipsum}
\usepackage{amssymb}
\usepackage{amsfonts}
\usepackage{graphicx}
\usepackage{epstopdf}
\usepackage{algorithmic}
\ifpdf
  \DeclareGraphicsExtensions{.eps,.pdf,.png,.jpg}
\else
  \DeclareGraphicsExtensions{.eps}
\fi

\newsiamremark{remark}{Remark}
\newsiamremark{hypothesis}{Hypothesis}
\crefname{hypothesis}{Hypothesis}{Hypotheses}
\newsiamthm{claim}{Claim}
\newcommand{\Th}{\mathcal{T}_h}

\newcommand{\jump}[1]{\llbracket #1 \rrbracket}

\newcommand{\av}[1]{\{\!\!\{ #1 \}\!\!\}}

\newcommand{\enn}{\mathrm{ENN}} 
\newcommand{\bn}{{\bf{n}}}
\newcommand{\bu}{{\bf{u}}} 
\newcommand{\bU}{{\bf{U}}}

\headers{Enriched linearized Neural network approximation}{Seungil Kim, Gwanghyun Jo, and Young Ju Lee}

\title{A Locally Conservative Enriched Linearized Neural Network Approximation to Elliptic PDEs\thanks{Submitted to the editors DATE.
\funding{The work of S. Kim was  supported in part, by the National Research Foundation of Korea(NRF) grant funded by the Korea govenment(MSIT) (RS-2026-25591545). The work of Y. Lee was in part funded by NSF-DMS 2208499}}}

\author{Seungil Kim  \thanks{Kyung Hee University, Seoul, Korea
  (\email{sikim@khu.ac.kr}, \url{https://sikim.khu.ac.kr/}).}
\and Gwanghyun Jo \thanks{Department of Mathematics, Hanyang University, Erica, Ansan, Korea 
(\email{gwanghyun@hanyang.ac.kr},
\url{https://sites.google.com/view/gwanghyunjo}).}
\and Young Ju Lee \thanks{Mathematics Department, Texas State University, San Marcos, TX, US (\email{yjlee@txstate.edu})}}

\usepackage{amsopn}
\usepackage{comment} 

\newcommand{\Reals}[1]{{\rm I\! R}^{#1}}

\makeatletter
\newcommand*{\addFileDependency}[1]{
  \typeout{(#1)}
  \@addtofilelist{#1}
  \IfFileExists{#1}{}{\typeout{No file #1.}}
}
\makeatother

\makeatletter
\AtBeginDocument{%
  \def\refstepcounter@optarg[#1]#2{%
    \cref@old@refstepcounter{#2}%
    \cref@constructprefix{#2}{\cref@result}%
    \@ifundefined{cref@#1@alias}%
      {\def\@tempa{#1}}%
      {\def\@tempa{\csname cref@#1@alias\endcsname}}%
    \protected@edef\cref@currentlabel{%
      [\@tempa][\arabic{#2}][\cref@result]%
      \csname p@#2\endcsname\csname the#2\endcsname}}%
}
\makeatother

\providecommand{\Lkn}{L^k_n}
\providecommand{\Vnh}{V_{n,h}^{\enn}}
\providecommand{\Mzero}{\mathcal{M}^0(\mathcal{T}_h)}
\providecommand{\EGn}{{\rm EG}}
\providecommand{\avint}{\mathop{\,\rlap{--}\!\!\int}\nolimits}
\usepackage{booktabs}

\ifpdf
\hypersetup{
  pdftitle={A Locally Conservative Enriched Linearized Neural Network Approximation to Elliptic PDEs},
  pdfauthor={Young Ju Lee}
}
\fi

\begin{document}

\maketitle

\begin{abstract}
This paper presents a locally conservative Enriched Linearized Neural Network (ENN) method for the Darcy flow model. A linearized shallow ReLU$^k$ network, whose hidden-layer parameters are fixed on a quasi-uniform set of the sphere, carries the approximation, and piecewise constant functions on an auxiliary subdivision of the domain are added to ensure local mass conservation. The subdivision may consist of general, possibly curved, elements. Since neural network functions do not satisfy the inverse inequality, the stability and error analysis of the standard discontinuous Galerkin interior penalty formulation do not apply. 
The missing inverse inequality is remedied by an edge identity combined with a Galerkin least-squares (GLS) term, together with a single neural approximant that is optimal in the $L^2$, $H^1$ and $H^2$ norms simultaneously. We use the nonsymmetric interior penalty (NIPG) form, which is coercive for every positive penalty parameter. With $n$, the width of the shallow neural network, used in ENN, we prove an error estimate in the EG norm that is of the optimal $H^1$ order, i.e., $O(n^{-(r-1)/d})$ for solutions in $H^r$, $r\ge2$, and $O(n^{-(s-1)/d})$ for solutions in $H^s$, $\frac32<s\le2$, for any shape-regular subdivision, when the GLS parameter satisfies $\min\{h,n^{-1/d}\}^2\lesssim\tau\lesssim n^{-2/d}$ and the edge terms are scaled with $\min\{h_e,n^{-1/d}\}$, i.e., by the resolution of the network rather than of the mesh. A numerical Darcy flux is constructed, which is locally conservative, regardless of the size or the number of the elements. Sample numerical results demonstrate the correctness of the theory. In particular, a fixed $4\times4$ subdivision gives the same accuracy as a subdivision refined with the network, and the mass loss stays at round-off unlike a non-conservative neural network method.
\end{abstract}

\begin{keywords}
Linearized Neural Network, Enriched Neural Network, Conservation, Interior Penalty, Galerkin Least Squares
\end{keywords}

\begin{AMS}
65N30, 65N12, 65N15, 68T07
\end{AMS}

\section{Introduction}

Neural networks are attractive for solving partial differential equations (PDEs) because a network $u_\theta:\Reals{d}\to\Reals{}$ is a global, mesh-free function that can be evaluated anywhere, its derivatives are available exactly, and its approximation power degrades with the dimension $d$ far more slowly than that of grid-based methods, for which a tensor-product grid with $M$ points per direction has $M^d$ points. These properties underlie the deep Ritz method \cite{e2018deep} and the finite neuron method \cite{xu2020finite}. The main practical difficulties are the nonconvexity and ill-conditioning of training; linearized neural networks, random feature methods and extreme learning machines reduce them by fixing the hidden-layer parameters, so that the discrete problem becomes linear. These methods, however, are not locally conservative. Conservation can be imposed as a hard constraint \cite{jagtap2020conservative,yan2023cell}; here we obtain it from the approximation space instead.

We propose the Enriched Linearized Neural Network (ENN) space $V_{n,h}^{\enn}$, following the enriched Galerkin (EG) method \cite{sunliu2009,lee2016locally}. Its principal component is the linearized ReLU$^k$ network space $L^k_n$ of width $n$ \cite{liu2025integral} (see \S\ref{LNN}), whose inner parameters $\{\theta_j\}_{j=1}^n$ are fixed on a quasi-uniform point set of the sphere. This component requires no mesh, its approximation rates are robust with respect to $d$ \cite{liu2025integral,siegel2024sharp}, it can be refined by adding neurons where the residual is large \cite{lu2026r}, and curved or implicitly defined domains are handled by evaluating integrals on the geometry directly. The second component consists of piecewise constants on an auxiliary partition $\Th$ of $\Omega$, whose elements may be general and possibly curved; its purpose is element-wise mass conservation, which a purely meshless expansion cannot guarantee. The method is thus a hybrid. Namely, the network carries the approximation, and a minimal low-order mesh carries the conservation.

The classical EG analysis \cite{lee2016locally} does not carry over directly to that of ENN, because neural network functions do not satisfy the inverse (discrete trace) inequalities on which the analysis of interior penalty methods rests \cite{wheeler1978elliptic,arnold2002unified,CiarletP-1978aa}. For Dirichlet problems on the purely neural space, Liu \cite{liu2026neural} showed that global $H^2$-conformity allows Green's identity to replace the inverse inequality in an unsymmetric Nitsche method with Galerkin least-squares (GLS) stabilization \cite{hughes1987new}, with the rate $n^{-(r-2)/d}$ in an $H^2$-type energy norm; local conservation is not addressed there. Our contributions are as follows.
\begin{enumerate}[leftmargin=0.7cm]
\item[(a)] We use the NIPG form on $V_{n,h}^{\enn}$, which is coercive for every positive penalty parameter, so that no inverse inequality is needed for stability (\S\ref{sec:coer}).
\item[(b)] We prove a C\'ea-type estimate whose constant, $1+\sqrt{10}$, is independent of $h$, $n$, $\tau$ and the penalty parameters, in contrast to the $O(\tau^{-1/2})$ constant in \cite{liu2026neural}; this allows $\tau$ to be scaled with $n$. With $\min\{h,n^{-1/d}\}^2\lesssim\tau\lesssim n^{-2/d}$ and the edge terms scaled with $\min\{h_e,n^{-1/d}\}$, it gives the $H^1$-optimal rate $O(n^{-(r-1)/d})$ in the EG norm for every shape-regular subdivision, coarse or fine, and $O(n^{-(s-1)/d})$ for solutions in $H^s$, $\frac32<s\le2$ (\S\ref{sec:error}). Hence the subdivision does not affect the accuracy, and it can be chosen solely according to the control volumes on which local mass conservation is required, e.g., the cells of a transport solver.
\item[(c)] The estimate uses a single approximant that is optimal in $L^2$, $H^1$ and $H^2$ simultaneously. This is implicit in the construction of \cite{liu2025integral}; we state it in Proposition~\ref{prop:simul} and prove it in Appendix~\ref{app:simul} for completeness. 
\item[(d)] We construct a locally conservative flux on any subdivision (\S\ref{sec3}) and compare it numerically with a non-conservative neural method (\S\ref{numerical_experiment}).
\end{enumerate}
We use the standard Sobolev spaces $H^k(\Omega)$, $H^0=L^2$, and $H({\rm div})$, denote by $\nabla_h$ the element-wise gradient, and write $a\lesssim b$ if $a\le Cb$ with $C$ independent of $n$, $h$ and the functions involved \cite{xu1992iterative}. Section~\ref{model} presents the model problem and the ENN space, \S\ref{scheme} the scheme and its analysis, \S\ref{sec3} the locally conservative flux, \S\ref{numerical_experiment} numerical experiments, and \S\ref{conclusion} concluding remarks.
\section{Model Problems}\label{model} 
Let $\Omega \subset \Reals{d}$ be a 
bounded Lipschitz domain, which need not be polygonal/polyhedral, with boundary 
$\partial\Omega = \overline{\Gamma_D} \cup \overline{\Gamma_N}$, where $\Gamma_D$ or $\Gamma_N$ may be empty. We shall consider to solve the following diffusion problem with mixed boundary conditions: 
\begin{subeqnarray}\label{eq:strong}
-\nabla\cdot(\kappa \nabla p) &=& f, \quad \text{ in } \Omega,\\
p &=& g_D, \quad \text{ on } \Gamma_D,\\
- \kappa \nabla p \cdot n &=& g_N, \quad \text{ on } \Gamma_N,
\end{subeqnarray}
where $\bu = -\kappa \nabla p$ is the Darcy flux and the conductivity $\kappa=\kappa(x)$ satisfies
\begin{equation}\label{eq:kappa}
\kappa\in W^{1,\infty}(\Omega),\qquad 0<\kappa_{\rm min}\le\kappa(x)\le\kappa_{\rm max}\quad\text{for a.e. }x\in\Omega .
\end{equation}
In particular, $\kappa$ is Lipschitz continuous on $\overline\Omega$, and $\kappa$ is \emph{not} assumed to be piecewise constant on the subdivision introduced below. We shall use the following two consequences of \eqref{eq:kappa}: for every $v\in H^1(D)$ and $w\in H^2(D)$ on a subdomain $D\subset\Omega$,
\begin{equation}\label{eq:kappa-bd}
\kappa_{\rm min}\|\nabla v\|_{0,D}^2\le\|\kappa^{1/2}\nabla v\|_{0,D}^2\le\kappa_{\rm max}\|\nabla v\|_{0,D}^2,
\,\,\,
\|\nabla\cdot\kappa\nabla w\|_{0,D}\le C_\kappa\|w\|_{H^2(D)},
\end{equation}
where $C_\kappa:=\kappa_{\rm max}+\|\nabla\kappa\|_{L^\infty(\Omega)}$. The hidden constants in our estimates depend on $\kappa_{\rm min}$, $\kappa_{\rm max}$ and $\|\nabla\kappa\|_{L^\infty(\Omega)}$. Thus, the result presented in this paper is not necessarily robust with respect to high contrast.  

Throughout the paper, we assume $f\in L^2(\Omega)$ for two technical reasons. First, the local conservation requires that $(f,\psi_T) = \int_T f\, dx$ makes sense for the characteristic function $\psi_T$ of an element $T$, which is not the case for a general $f \in H^{-1}(\Omega)$. Second, the GLS term introduced in \S\ref{scheme} requires $f \in L^2(\Omega)$. Consequently, the natural space for the solution $p$ is
\begin{equation}\label{eq:X} 
X = \{v \in H^1(\Omega) : \nabla \cdot \kappa \nabla v \in L^2(\Omega)\}. 
\end{equation} 
By \eqref{eq:kappa-bd}, $H^2(\Omega)\subset X$. 
The approximation space for $p$ consists of a linearized neural network and piecewise constant functions on a mesh of $\Omega$ \cite{lu2026invariant}, and the error is presented in terms of the number of neurons, $n$, and the mesh size, $h$.

\subsection{Linearized Neural Network Functions}
\label{LNN} 
Our discussion follows \cite{mao2026solving}. Let $\sigma_k(t) := \max\{t,0\}^k$, $t\in\Reals{}$, be the ReLU$^k$ activation, and for $x \in \Reals{d}$ let $x_{\rm aug} := (x-x_c,1) \in \Reals{d+1}$ be the augmented variable, where $x_c\in\Reals{d}$ is a fixed center, the same for all $x$ and all neurons (e.g., $x_c=0$, or the centroid of $\Omega$).
We let $\mathbb{D}$ be a symmetric dictionary of ReLU$^k$ ridge functions given by
\begin{equation} 
\mathbb{D} = \left\{
        \pm \sigma_k(\theta \cdot x_{\rm aug})
        :
        \theta = (\omega,b) \in \mathbb{S}^d  \,\, \mbox{ with } \omega \in \Reals{d} \mbox{ and } b \in \Reals{}
    \right\},
\end{equation} 
where $\mathbb{S}^d$ denotes the unit sphere in $\Reals{d+1}$. Note that $\theta\cdot x_{\rm aug}=\omega\cdot(x-x_c)+b$, so that $\theta$ carries both the direction and the bias of a neuron. For $M>0$, the shallow neural network class is $\Sigma_{n,M}(\mathbb{D}) :=\{ \sum_{i=1}^n a_i d_i: a_i \in \Reals{},\ d_i \in \mathbb{D},\ \sum_{i=1}^n |a_i| \le M\}$; each neuron is a global, genuinely mesh-less function. The following theorem gives the optimal approximation rate of ReLU$^k$ shallow networks for functions in the corresponding variation space.
\begin{theorem}[{\cite{siegel2024sharp}}]
Let $k \ge m$ and let $u \in K_1(\mathbb{D})$, where $K_1(\mathbb{D})$ denotes the variation space
associated with the dictionary $\mathbb{D}$. Then, with $M\simeq\|u\|_{K_1(\mathbb{D})}$,
\begin{equation}
\inf_{u_n \in \Sigma_{n,M}(\mathbb{D})}\|u-u_n\|_{H^m(\Omega)}
\leq C \|u\|_{K_1(\mathbb{D})}
n^{-\frac12-\frac{2(k-m)+1}{2d}},
\end{equation} 
where $C$ is independent of $u$ and $n$.
\end{theorem}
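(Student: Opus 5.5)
The proof rests on a Maurey-type sampling argument for the variation space, refined by a stratified (quasi-uniform) sampling of the dictionary sphere; this is the strategy of \cite{siegel2024sharp}, and we only indicate its steps. Write $u\in K_1(\mathbb{D})$ as an integral representation
\[
u(x)=\int_{\mathbb{S}^d}\sigma_k(\theta\cdot x_{\rm aug})\,d\mu(\theta),
\]
with a signed measure $\mu$ satisfying $\|\mu\|\le(1+\epsilon)\|u\|_{K_1(\mathbb{D})}$. By the symmetry of $\mathbb{D}$ we may absorb signs and split $\mu=\mu_+-\mu_-$, so it suffices to treat a probability measure scaled by $M\simeq\|u\|_{K_1}$. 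The plain Maurey argument draws $n$ i.i.d.\ samples $\theta_i\sim\mu$. It gives $\mathbb{E}\|u-u_n\|_{H^m}^2\le M^2\sup_\theta\|\sigma_k(\theta\cdot x_{\rm aug})\|_{H^m}^2/n$, which is only the rate $n^{-1/2}$. The extra factor $n^{-(2(k-m)+1)/(2d)}$ must come from smoothness of the dictionary in $\theta$.

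\textbf{Step 1 (partition).} Partition $\mathbb{S}^d$ into $N\simeq n$ cells $\Theta_j$ of diameter $\lesssim n^{-1/d}$ and nearly equal $\mu$-mass. One handles this by splitting cells so that both mass and diameter are controlled, using $O(n)$ pieces in total.

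\textbf{Step 2 (local sampling).} On each cell, sample one point $\theta_j\sim\mu|_{\Theta_j}/\mu(\Theta_j)$ independently. The error is then a sum of independent mean-zero terms, so
\[
\mathbb{E}\|u-u_n\|_{H^m}^2=\sum_j\mu(\Theta_j)^2\,{\rm Var}_j ,
\]
where ${\rm Var}_j$ is the variance of $\sigma_k(\theta\cdot x_{\rm aug})$ in $H^m(\Omega)$ over $\theta\in\Theta_j$.

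\textbf{Step 3 (variance bound).} For $\theta,\theta'$ within distance $\delta$, the derivative $D^m_x\sigma_k(\theta\cdot x_{\rm aug})$ is a multiple of $\sigma_{k-m}$. It is Lipschitz in $\theta$ with constant $\delta$ wherever both arguments are positive, but it jumps or is nonsmooth near the hyperplane $\{\theta\cdot x_{\rm aug}=0\}$, on a slab of width $\delta$. There the difference has size $\delta^{k-m}$, which gives
\[
\|\sigma_k(\theta\cdot x_{\rm aug})-\sigma_k(\theta'\cdot x_{\rm aug})\|_{H^m}^2\lesssim\delta^{2(k-m)+1}.
\]
This slab estimate is the main obstacle; for $k=m$ the exponent $1$ reflects the $L^2$ norm of a jump. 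With $\delta=n^{-1/d}$ and $\sum_j\mu(\Theta_j)^2\lesssim M^2/n$, we get $\mathbb{E}\|u-u_n\|^2\lesssim M^2 n^{-1-(2(k-m)+1)/d}$, which is the claimed rate. The coefficient sum is $\sum|a_i|\le M$ automatically.

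\textbf{Step 4 (sharpening).} The above is the exponent of the simple argument. The sharp result of \cite{siegel2024sharp} obtains exactly this exponent, but to make Step 3 rigorous one must be more careful. When cell diameters exceed $n^{-1/d}$, one uses a multilevel (chaining/dyadic) refinement of the stratification. Alternatively, one invokes the discrepancy-based construction there, which controls the contribution of nonuniform $\mu$-mass without losing logarithms. Finally, $\Omega$ bounded with a fixed $x_c$ ensures $|x_{\rm aug}|$ is bounded, so all constants are uniform in $\theta$.
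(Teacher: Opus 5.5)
The paper does not prove this statement; it quotes it from \cite{siegel2024sharp}, so the question is whether your sketch proves it. Your Steps 1--2 are sound. This is Makovoz-type stratified sampling with $O(n)$ cells of diameter $\lesssim n^{-1/d}$ and $|\mu|$-mass $\le\|\mu\|/n$. For $k=m$ your Step 3 is also correct and gives the stated rate $n^{-\frac12-\frac1{2d}}$.

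For $k>m$, however, the variance bound in Step 3 is false. For $|\beta|\le m<k$ one has $D_x^\beta\sigma_k(\theta\cdot x_{\rm aug})=c_\beta\,\sigma_{k-|\beta|}(\theta\cdot x_{\rm aug})\,\omega^\beta$ with $\sigma_{k-|\beta|}\in C^1$. So to first order $d_\theta-d_{\theta'}$ equals $\partial_\theta d_\theta\,(\theta-\theta')$, which has size $|\theta-\theta'|$ on the whole region $\{\theta\cdot x_{\rm aug}>0\}$, not only on a slab. You note this Lipschitz behaviour yourself and then count only the slab. Concretely, take $d=1$, $\Omega=(-1,1)$, $x_c=0$, $k=1$, $m=0$, $\theta=(0,1)$ and $\theta'=(\sin\epsilon,\cos\epsilon)$. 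The difference is $x\sin\epsilon+\cos\epsilon-1$, whose squared $L^2$ norm is $\simeq\epsilon^2$, not $O(\epsilon^3)$. Thus one sample per cell gives only ${\rm Var}_j\lesssim\delta^2$ and the classical rate $n^{-\frac12-\frac1d}$. Step 4 does not repair this: cell diameters are already controlled by Step 1, and nothing there cancels the Taylor terms of $d_\theta$ of orders $1,\dots,k-m$, which is what the sharp exponent requires.

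The missing idea is to use the full smoothness of the $H^m(\Omega)$-valued map $\theta\mapsto d_\theta:=\sigma_k(\theta\cdot x_{\rm aug})$. It is $C^{q}$, with $q:=k-m$, and its $q$-th derivative is $\frac12$-H\"older. The reason is that after $q$ differentiations in $\theta$ the top-order $x$-derivatives contain $\sigma_0(\theta\cdot x_{\rm aug})$, and this is where your slab estimate belongs. In other words, $\mathbb{D}$ is smoothly parameterized of order $q+\frac12$ by the $d$-dimensional manifold $\mathbb{S}^d$, which is essentially the mechanism behind the sharp exponent in \cite{siegel2024sharp}.

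To exploit it, fix in each cell $\Theta_j$ (in a chart) a scaled unisolvent set $\theta_{j,1},\dots,\theta_{j,D}$, $D=\binom{d+q}{q}$, for polynomials of degree $\le q$. The associated Lagrange functions $\ell_{j,i}$ are bounded on $\Theta_j$ independently of $\delta$. Taylor expansion about a point of $\Theta_j$ together with polynomial reproduction gives
\[
E_j(\theta):=d_\theta-\sum_{i=1}^{D}\ell_{j,i}(\theta)\,d_{\theta_{j,i}},\qquad
\|E_j(\theta)\|_{H^m(\Omega)}\lesssim\delta^{\,q+\frac12}\quad(\theta\in\Theta_j).
\]
Then
\[
u=\sum_{j,i}\Big(\int_{\Theta_j}\ell_{j,i}\,d\mu\Big)d_{\theta_{j,i}}+\sum_j\int_{\Theta_j}E_j\,d\mu .
\]
Keep the first sum exactly, and apply your Step 2 only to the second, sampling one $\vartheta_j$ per cell from the normalized restriction of $|\mu|$ to $\Theta_j$. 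Now ${\rm Var}_j\lesssim\delta^{2q+1}$, so $\mathbb{E}\|u-u_n\|_{H^m}^2\lesssim\|\mu\|^2\delta^{2(k-m)+1}/n$. This uses $D+1$ neurons per cell, with coefficient sum $\lesssim\|\mu\|$ (signs absorbed by the symmetry of $\mathbb{D}$), which is the claim with $M\simeq\|u\|_{K_1(\mathbb{D})}$. A random Carath\'eodory measure on each cell matching the moments of $\mu|_{\Theta_j}$ up to order $q$ would serve equally well.
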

Thus, ReLU$^k$ shallow networks achieve an approximation rate whose exponent does not deteriorate with the dimension, with no reference to a mesh.
Since the approximation space must be contained in $X$, the neurons need to belong at least to $H^2$. Thus, we assume $k \geq 2$ throughout the paper, so that each neuron lies in $C^1(\overline\Omega)\cap W^{2,\infty}(\Omega)$ \cite{schumaker2007spline}. 

The same rate can be achieved even when the hidden-layer parameters are fixed in advance \cite{mao2026solving,liu2025integral}. Let $\{\theta_j\}_{j=1}^n \subset \mathbb{S}^d$ be a pre-determined set of parameters on the sphere. We define the associated
basis functions by
\begin{equation}\label{eq:phij} 
\phi_j(x) := \sigma_k(\theta_j \cdot x_{\rm aug}) = \sigma_k(w_j \cdot (x-x_c) + b_j), \qquad j=1,\ldots,n.
\end{equation} 
The finite neuron space is $L_n^k := \operatorname{span}\{\phi_1,\ldots,\phi_n\}$, and, analogously to $\Sigma_{n,M}(\mathbb{D})$, $L_{n,M}^k := \{ \sum_{j} a_j \phi_j : ( n \sum_{j} a_j^2)^{1/2}\leq M\}$. The parameters $\{\theta_j\}_{j=1}^n$ are chosen quasi-uniform, i.e., their separation distance and their fill distance on $\mathbb{S}^d$ are both comparable to
\begin{equation}\label{eq:delta}
\delta:=n^{-1/d}. 
\end{equation}
We shall set $r_{\max}:=\frac{d+2k+1}{2}$. Then, the following result can be established: 
\begin{theorem}[{\cite{liu2025integral}}]\label{thm:lmx}
Let $k \ge m$ and $u \in H^{r_{\max}}(\Omega)$. Assume that the points $\{\theta_j\}_{j=1}^n \subset \mathbb{S}^d$ form a
quasi-uniform grid on $\mathbb{S}^d$. Then, with $M\simeq\|u\|_{H^{r_{\max}}(\Omega)}$,
\[
    \inf_{u_n \in L_{n,M}^k}
    \|u-u_n\|_{H^m(\Omega)}
    \lesssim
    n^{-\frac12-\frac{2(k-m)+1}{2d}}
    \|u\|_{H^{\frac{d+2k+1}{2}}(\Omega)}.
\]
More generally, for $m\le r\le r_{\max}$ and $u\in H^r(\Omega)$, we have that \begin{equation} 
\inf_{u_n\in \Lkn}\|u-u_n\|_{H^m(\Omega)}\lesssim\delta^{r-m}\|u\|_{H^r(\Omega)}.
\end{equation} 
\end{theorem}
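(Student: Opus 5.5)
The plan is to follow the integral-representation approach of \cite{liu2025integral}: represent $u$ exactly as a ReLU$^k$ superposition over $\mathbb{S}^d$ with a square-integrable density, then discretize the integral with quadrature on the quasi-uniform nodes $\{\theta_j\}$. I would prove the first (top-order) estimate first and obtain the general rate for $m\le r\le r_{\max}$ afterwards by interpolation.

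\emph{Step 1 (extension).} Use a Stein extension operator to extend $u\in H^{r}(\Omega)$ to $\tilde u\in H^{r}(\Reals{d})$ with $\|\tilde u\|_{H^r}\lesssim\|u\|_{H^r(\Omega)}$, supported in a fixed ball $B$ around $x_c$. This is where the Lipschitz hypothesis on $\Omega$ enters.

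\emph{Step 2 (integral representation).} For $r=r_{\max}=\frac{d+2k+1}{2}$, I would establish
\[
\tilde u(x)=\int_{\mathbb{S}^d}\sigma_k(\theta\cdot x_{\rm aug})\,g(\theta)\,d\theta+P(x),\qquad x\in\Omega,
\]
with $\|g\|_{L^2(\mathbb{S}^d)}\lesssim\|u\|_{H^{r_{\max}}}$. Here $P$ is a polynomial of degree $\le k$, which is itself representable by the dictionary. The idea is the Radon/ridgelet inversion formula: $\tilde u$ is recovered from $\partial_t^{k+1}$ of its Radon transform, weighted by $\Lambda^{d-1}$. Up to the polynomial, this exhibits $g$ as $\partial_t^{(d+2k+1)/2}$-type Radon data of $\tilde u$. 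The $L^2$ bound then follows from the Sobolev isometry of the Radon transform, which loses exactly $(d-1)/2$ derivatives; this is why the exponent $r_{\max}$ appears.

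\emph{Step 3 (quadrature).} Partition $\mathbb{S}^d$ into Voronoi-type cells $\Omega_j\ni\theta_j$ of diameter $\simeq\delta$ and measure $\simeq n^{-1}$, and set $u_n=\sum_j a_j\phi_j$ with $a_j=\int_{\Omega_j}g$. Cauchy--Schwarz gives $n\sum_j a_j^2\lesssim\|g\|^2_{L^2}$, which is the bound $M$. For the error, write
\[
u-u_n=\sum_j\int_{\Omega_j}\bigl[\sigma_k(\theta\cdot x_{\rm aug})-\sigma_k(\theta_j\cdot x_{\rm aug})\bigr]g(\theta)\,d\theta .
\]
Each $H^m$ derivative of the bracket involves $\sigma_{k-m}$, which is Lipschitz in $\theta$ with constant $O(\delta)$.

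\emph{Step 4 (main obstacle: the sharp rate).} The crude bound from Step 3 gives only $O(\delta)$. Obtaining the extra $n^{-1/2}$ and the exponent $\frac{2(k-m)+1}{2d}$ requires exploiting cancellation. I would subtract the first-order Taylor term in $\theta-\theta_j$, choosing $\theta_j$ as the $g$-weighted centroid, or absorbing the term via the symmetric dictionary. I would then handle the region where the hyperplane $\theta\cdot x_{\rm aug}=0$ passes near $x$ separately, since $\sigma_{k-m}$ is only H\"older of order $k-m$ there. Measuring in $L^2(\Omega)$ makes the contribution of these singular strips small, of width $\delta$. Combined with an $L^2$ near-orthogonality of the pieces across cells, this yields the rate $n^{-1/2-(2(k-m)+1)/(2d)}$, as in \cite{siegel2024sharp}. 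This step is the technical core, and I expect it to be the hardest part.

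\emph{Step 5 (general $r$).} For $m\le r\le r_{\max}$, I would interpolate between the trivial case $r=m$, where the best approximation error is at most $\|u\|_{H^m}$ (take $u_n=0$), and the endpoint $r_{\max}$, where the rate is $\le\delta^{r_{\max}-m}$. Concretely, I would use the $K$-functional: split $u=u_1+u_2$ with $u_2$ a mollification at scale $\delta$, so that
\[
\|u_1\|_{H^m}\lesssim\delta^{r-m}\|u\|_{H^r},\qquad \|u_2\|_{H^{r_{\max}}}\lesssim\delta^{r-r_{\max}}\|u\|_{H^r}.
\]
Approximating $u_2$ by Steps 1--4 then gives the claimed bound $\delta^{r-m}\|u\|_{H^r}$.
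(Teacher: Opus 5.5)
\paragraph{Where the proposal breaks.} Steps~3--4 have a genuine gap, and sharper bookkeeping will not close it. The cell-average rule $a_j=\int_{\Omega_j}g$ at the fixed nodes $\theta_j$ is only first-order accurate, and its first-order error adds up coherently instead of cancelling. For a counterexample, take on each $\Omega_j$ the density $g(\theta)=\delta^{-1}w(\theta_j)\cdot(\theta-\theta_j)$, where $w$ is a bounded tangential field. Then $|g|\lesssim1$, so $\|u\|_{H^{r_{\max}}(\Omega)}\lesssim\|g\|_{L^2}\lesssim1$. The first-order Taylor part of your error is $\sum_j k\sigma_{k-1}(\theta_j\cdot x_{\rm aug})\,x_{\rm aug}\cdot\int_{\Omega_j}(\theta-\theta_j)g\,d\theta$. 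Test it against a fixed $\psi$, and take $w$ to be the tangential part of $G(\theta)=\int_\Omega\psi\,k\sigma_{k-1}(\theta\cdot x_{\rm aug})\,x_{\rm aug}\,dx$. Every cell then contributes with the same sign, roughly $\delta|\Omega_j||w(\theta_j)|^2$, while the Taylor remainder is $O(\delta^2)$. Hence $\|u-u_n\|_0\gtrsim\delta$. The target, however, is $\delta^{r_{\max}-m}=\delta^{(d+1)/2+k-m}$, which is strictly smaller than $\delta$ unless $d=1$ and $m=k$ (e.g.\ $\delta^{9/2}$ for $d=2$, $k=3$, $m=0$).

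The low-frequency part of the error is thus a Riemann sum of a fixed function, so no near-orthogonality across cells can produce the factor $n^{-1/2}$ for it. The remedies you list are not available in $L^k_n$:
\begin{itemize}
\item The $\theta_j$ are fixed before $u$ is known, so they cannot be placed at $g$-weighted centroids.
\item The symmetry $\pm\sigma_k$ only allows signed coefficients. It supplies no derivative atoms $\sigma_{k-1}(\theta_j\cdot x_{\rm aug})\,x_{\rm aug}$.
\item The moment-matching and discrepancy argument of \cite{siegel2024sharp} chooses the atoms depending on $u$. That is precisely why it is a result for the nonlinear class $\Sigma_{n,M}(\mathbb{D})$, not for fixed nodes.
\item Even removing the first moment would leave a coherent $O(\delta^2)$ second-order term.
\end{itemize}

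\paragraph{The missing idea.} You need a discretization that reproduces the coherent, low-frequency part \emph{exactly}. This is the argument of \cite{liu2025integral} reproduced in Appendix~\ref{app:simul} (Lemma~\ref{lem:sphere}).
\begin{itemize}
\item Lift $u$ to $\Phi$ on $\mathbb{S}^d$ with the parity $\Phi(-\eta)=(-1)^{k+1}\Phi(\eta)$, and expand in spherical harmonics via Funk--Hecke.
\item Band-limit the density at $J\simeq\delta^{-1}$: set $q=\sum_{m\in\mathcal{I},\,m\le J}\hat\sigma_k(m)^{-1}\mathcal{Y}_m\Phi$.
\item Put $c_j=\varpi_j q(\theta_j)$, using the positive-weight quadrature (Q1), which is exact for products of spherical polynomials of degree $\le J$.
\end{itemize}
Then every harmonic of degree $\le J$ of $\Phi-\Phi_n$ vanishes identically, and the error consists of two tails. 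The tail of $\Phi$ is at most $J^{s-r}\|\Phi\|_{H^r}$. The tail of $\Phi_n$ is controlled by the separation estimate (Q2) together with $\|\mathbf c\|_2^2\lesssim\delta^d\|q\|_{L^2(\mathbb{S}^d)}^2\lesssim\delta^{2r-2k-1}\|\Phi\|_{H^r}^2$. This last bound is where $\hat\sigma_k(m)^2\simeq(m+1)^{-(d+2k+1)}$ and $r\le r_{\max}$ enter.

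The argument gives $\delta^{r-m}$ for all $m\le\min\{k,r\}$ and all $r\le r_{\max}$ at once. At $r=r_{\max}$ it also gives $n\|\mathbf c\|_2^2\lesssim\|u\|_{H^{r_{\max}}}^2$, which is exactly the constraint defining $L^k_{n,M}$.

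\paragraph{What carries over.} Your Steps~1--2 fit this framework: your Radon/ridgelet density plays the role of $\sum_m\hat\sigma_k(m)^{-1}\mathcal{Y}_m\Phi$. Your Step~5 interpolation is a valid reduction once the endpoint is known, though it becomes unnecessary. Steps~3--4, however, must be replaced by a quadrature of this kind, exact on the band-limited part.
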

Thus, optimizing the hidden parameters is not necessary to achieve the optimal rate. Theorem~\ref{thm:lmx} provides, for each $m$, a possibly different approximant. The error
analysis in \S\ref{sec:error} needs one approximant that is optimal in $L^2$, $H^1$ and $H^2$ at the same time. Indeed, the construction of \cite{liu2025integral} has this property and we provide the proof in
Appendix~\ref{app:simul} for completeness.

\begin{proposition}[Simultaneous Approximation]\label{prop:simul}
Let $k\ge2$, $2\le r\le r_{\max}$, and let $\{\theta_j\}$ be quasi-uniform. For every $v\in H^r(\Omega)$
there is a single $v_n\in\Lkn$ such that
\begin{equation}\label{eq:simul}
\|v-v_n\|_{H^s(\Omega)}\lesssim\delta^{r-s}\|v\|_{H^r(\Omega)},\qquad s=0,1,2 .
\end{equation}
\end{proposition}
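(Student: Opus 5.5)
We follow the construction behind Theorem~\ref{thm:lmx} from \cite{liu2025integral} and check that the approximant it produces does not depend on the Sobolev index $m$. Throughout, $v\in H^r(\Omega)$ with $2\le r\le r_{\max}$.

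\emph{Step 1: extension.} First apply a Stein extension operator $E$ with $Ev\in H^r(\Reals{d})$ and $\|Ev\|_{H^r}\lesssim\|v\|_{H^r(\Omega)}$. Multiply by a fixed cutoff equal to one on $\Omega$, so that $Ev$ has support in a fixed ball $B$ centered at $x_c$.

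\emph{Step 2: integral representation and mollification.} For $u\in H^{r_{\max}}$, the function admits an exact representation on $\Omega$ as
\[
u(x)=\int_{\mathbb{S}^d}\sigma_k(\theta\cdot x_{\rm aug})\,\rho_u(\theta)\,d\theta+\pi_u(x),
\]
with density $\|\rho_u\|_{L^2(\mathbb{S}^d)}\lesssim\|u\|_{H^{r_{\max}}}$. The term $\pi_u$ is a polynomial of degree at most $k$, and it is itself representable by ReLU$^k$ neurons. This is the Radon/ridgelet representation used in \cite{liu2025integral}.

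For general $r<r_{\max}$, we replace $Ev$ by the mollification $u_\delta:=Ev*\eta_\delta$ at the scale $\delta=n^{-1/d}$. Standard mollifier estimates give, for $s=0,1,2$ at once,
\[
\|Ev-u_\delta\|_{H^s}\lesssim\delta^{r-s}\|v\|_{H^r},
\qquad
\|u_\delta\|_{H^{t}}\lesssim\delta^{r-t}\|v\|_{H^r}\quad(t\ge r).
\]
The key point is that $u_\delta$ is one function serving all three indices $s$.

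\emph{Step 3: quadrature.} Discretize the integral by a quadrature rule on the quasi-uniform nodes $\{\theta_j\}$, taking $v_n:=\sum_j w_j\rho_{u_\delta}(\theta_j)\phi_j$ plus the polynomial part. One can take the weights $w_j$ to be cell measures of a Voronoi partition, or use the smoothed/projected density of \cite{liu2025integral}. The $H^s$ quadrature error, for any $s\le k$, is bounded by the modulus of smoothness of the map $\theta\mapsto\sigma_k(\theta\cdot x_{\rm aug})$ in $H^s(\Omega)$ times the density. Since $\sigma_k^{(s)}$ is H\"older of order $k-s$, with the extra $1/2$-type gain coming from $L^2$ averaging in $x$ as in \cite{liu2025integral}, this gives
\[
\|u_\delta-v_n\|_{H^s(\Omega)}\lesssim\delta^{r_{\max}-s}\|u_\delta\|_{H^{r_{\max}}}\lesssim\delta^{r-s}\|v\|_{H^r}.
\]
Combining this with Step 2 by the triangle inequality yields \eqref{eq:simul}.

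\emph{Main obstacle.} The construction $v_n$ never refers to $s$, so simultaneity comes for free. The genuine difficulty is the quadrature estimate with the sharp exponent $r_{\max}-s$ in the $H^2$ norm. There the second derivative $\sigma_k''(t)=k(k-1)\sigma_{k-2}(t)$ has only $k-2$ orders of smoothness, which is zero when $k=2$ (a jump).

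We would handle this as \cite{liu2025integral} does, by bounding the $L^2(\Omega)$ norm of $\sigma_{k-2}(\theta\cdot x_{\rm aug})-\sigma_{k-2}(\theta'\cdot x_{\rm aug})$ by $|\theta-\theta'|^{k-2+1/2}$. Here the half power comes from the measure of the slab between the two hyperplanes. This estimate should be combined with the density smoothing, so that the density bound at level $H^{r_{\max}}$ trades against the mollification factor $\delta^{r-r_{\max}}$.
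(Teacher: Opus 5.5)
Your Steps 1--2 are a sound reduction: mollify at scale $\delta$, then apply a sharp $H^{r_{\max}}$ estimate for a construction that does not depend on $s$. Your remark that simultaneity is automatic once the coefficients are independent of $s$ is exactly the point of the paper's proof.

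\textbf{The gap is Step 3, which carries all the content.} A modulus-of-smoothness bound cannot produce the exponent $r_{\max}-s$. The map $\theta\mapsto\sigma_k(\theta\cdot x_{\rm aug})$ is H\"older of order $k-s+\frac12$ into $H^s(\Omega)$. Estimating the quadrature error cell by cell therefore gives at best $\delta^{k-s+1/2}\|\rho\|_{L^1(\mathbb{S}^d)}$, i.e.\ $n^{-(2(k-s)+1)/(2d)}$. Since $r_{\max}-s=(k-s+\frac12)+\frac d2$, the factor $\delta^{d/2}=n^{-1/2}$ is missing, and after your mollification the argument yields only $\delta^{r-s-d/2}\|v\|_{H^r}$.

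The slab half-power accounts for the ``$+1$'' in $2(k-m)+1$, not for the $n^{-1/2}$. The latter requires cancellation between different cells, which no estimate pointwise in $\theta$ can see. For random nodes it would come from independence, but the nodes here are fixed.

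There are two further problems:
\begin{itemize}
\item With Voronoi weights and point values $\rho_{u_\delta}(\theta_j)$, the variation of the density over a cell contributes a term whose natural bound, $\delta\|\nabla\rho_{u_\delta}\|\lesssim\delta^{r-r_{\max}}\|v\|_{H^r}$, does not decay.
\item The polynomial $\pi_u$ need not lie in $\Lkn$ for a fixed node set. The paper avoids it by the parity extension $\Phi(-\eta)=(-1)^{k+1}\Phi(\eta)$, which puts every mode of $\Phi$ in $\mathcal{I}$.
\end{itemize}

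\textbf{The missing ingredient is the spectral argument of the paper's appendix.} If you take the projected density of \cite{liu2025integral}, as you suggest as an alternative, you are using the paper's construction, but the estimate must then be proved spectrally.
\begin{itemize}
\item The density is truncated to $q=\sum_{m\in\mathcal{I},\,m\le J}\hat\sigma_k(m)^{-1}\mathcal{Y}_m\Phi$ with $J\simeq\delta^{-1}$; this truncation plays the role of your mollification.
\item One sets $c_j=\varpi_jq(\theta_j)$, with the positive weights of (Q1), which integrate products of spherical polynomials of degree $\le J$ exactly. Hence all modes $m\le J$ of $\Phi$ are reproduced exactly.
\item The error then consists only of the tail of $\Phi$, of size $J^{s-r}\|\Phi\|_{H^r}$ in $H^s$, and of the modes $m>J$ of the network. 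The squared $H^s$ norm of the latter is bounded by the Gram estimate (Q2) by $h_\Theta^{2k+1-2s}\|\mathbf c\|_2^2$.
\item Finally, $\varpi_j\lesssim h_\Theta^d$ and (Q1) give $\|\mathbf c\|_2^2\lesssim h_\Theta^d\|q\|_{L^2}^2\lesssim h_\Theta^{2r-2k-1}\|\Phi\|_{H^r}^2$. This factor $h_\Theta^d$ (i.e.\ $\delta^{d/2}$ in the norm) is precisely what your Step 3 lacks.
\end{itemize}

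For the same reason your ``main obstacle'' is misplaced. The case $s=k=2$ is not special: the slab bound handles $\sigma_0$, and the paper treats all $s\le\min\{k,r\}$ at once. The real difficulty is this dimension-independent gain, for every $s$.
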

For finite elements, simultaneous approximation follows from the inverse inequality. Namely, on a quasi-uniform mesh, the $L^2$ projection $Q_h$ onto a $C^1$ element of degree $k$ (e.g., Argyris) satisfies $|u-Q_hu|_{H^m}\lesssim|u-I_hu|_{H^m}+h^{-m}\|u-I_hu\|_0\lesssim h^{k+1-m}|u|_{H^{k+1}}$, $m=0,1,2$ \cite{CiarletP-1978aa}. Since $\Lkn$ has no inverse inequality, this argument is not immediately available. 
This simultaneous approximation result is critical in our paper to obtain the $H^1$-optimal error estimate (Theorem~\ref{thm:error}).
\subsection{Mesh and Piecewise Constant Functions} 

Conforming spaces, including neural networks, do not provide a conservative flux (see \S \ref{sec:compare}). The simplest remedy is to enrich the solution space by piecewise constant functions, so that conservation holds on each element of a mesh \cite{gong2025positivity}. Strongly (pointwise) conservative neural networks have been devised \cite{he2026divergence}, but they are expensive and not necessary for robust simulations, since local conservation is sufficient \cite{gong2025positivity} in many practical applications. 

We let $\Th$ be a shape-regular division of the domain $\Omega$ into elements $T$ of diameter $h_T\le h\le 1$. The elements may be very general, i.e., they could be polygons/polyhedra and also elements with curved edges along $\partial\Omega$. The only assumption we shall require is that each $T$ is star-shaped with respect to a ball of radius comparable to $h_T$, i.e., there is a ball $B_T\subset T$ of radius $\rho_T\ge c\,h_T$, with $c>0$ independent of $T$ and $h$, such that for every $x\in T$ and $y\in B_T$ the segment $\{tx+(1-t)y: t\in[0,1]\}$ lies in $T$ \cite[Chapter~4]{brenner2008mathematical}, that each edge $e\subset\partial T$ satisfies $|e|\simeq h_T^{d-1}$, and that the number of edges per element is uniformly bounded, so that the trace inequality \eqref{eq:fmtrace} and the approximation property of the $L^2$-projection onto constants hold. This division will be used to define the space $\mathcal{M}^0(\mathcal{T}_h) = {\rm span}\{\psi_T\}_{T \in \mathcal{T}_h}$, where $\psi_T$ is the characteristic function, which is one on $T$ and zero elsewhere on $\Omega$. 

Consequently, using two spaces $L_{n}^k$ and $\mathcal{M}^0(\mathcal{T}_h)$, we define the enriched linearized neural network space as follows:
\begin{equation}
V_{n,h}^{\enn} = L_{n}^k + \mathcal{M}^0(\mathcal{T}_h) = \left \{ P = \sum_{j=1}^n c_j\,\phi_j + \sum_{T\in\Th} d_T\,\psi_T \quad \forall c_j, d_T \in \Reals{} \right \}. 
\end{equation}
Throughout this paper, we shall write for every $v\in V_{n,h}^{\enn}$, $v=v_n+v_0$ with $v_n\in L^k_n$ and $v_0\in\mathcal{M}^0(\mathcal{T}_h)$. In this setting, it holds that on each element $\nabla v=\nabla v_n$. 

The representation of $P\in V_{n,h}^{\enn}$ is not unique, even though the discrete solution is unique, which we shall show later. This gives a singular linear system \cite{lee2016locally} in general. For instance, with $\theta = (0,\cdots,0,1)$, the north pole of $\mathbb{S}^d$, the neuron is the constant $1$; more generally, a neuron whose hyperplane $\{\theta_j\cdot x_{\rm aug}=0\}$ does not meet $\Omega$ is either zero or a polynomial of degree $k$ on $\Omega$. Thus there can be many instances where the linear system can be singular.  
\section{Variational Formulation based on $V_{n,h}^{\enn}$}\label{scheme}

In this section, we shall derive the variational formulation based on the approximation space $V_{n,h}^{\enn}$ to solve the equation \eqref{eq:strong}. First, for a given subdivision $\mathcal{T}_h$, we let $\mathcal{E}_h$ denote the set of edges, $\mathcal{E}_h^o$  denote the set of interior edges,   $\mathcal{E}_h^{D,\partial}$ and $\mathcal{E}_h^{N,\partial}$ denote the sets of Dirichlet and Neumann boundary edges, respectively. $e \in \mathcal{E}_h$ will be called the $(d-1)$-dimensional pieces of the element boundaries edges. They are actually faces when $d=3$, but, we shall simply call them as edges for consistency. We write $h_e$ for the diameter of the edge $e$ in $\mathcal{E}_h$. For an interior edge shared by two adjacent elements, denoted by $T^\pm$ with unit outward normal $\bn^\pm$ to the boundary of $\partial T^\pm$, we define the average and jump as follows: 
\begin{equation}
\av{v} = \frac{1}{2}
\left ( v^+ + v^- \right ), \quad 
\jump{v} 
= v^+ \bn^+ + v^- \bn^-, \quad \jump{\sigma}=\sigma^+\cdot\bn^++\sigma^-\cdot\bn^-,
\end{equation}
for scalar $v$ and vector $\sigma$. On boundary edges, we set $\av{\sigma} = \sigma$ and $\jump{v} = v\bn_{\Omega}$. Compared to the original DG or EG, the variational formulation is not much different at all. Only the conforming subspace changes. Multiplying the governing equation \eqref{eq:strong} by a test function $w \in V_{n,h}^{\enn}$ and integrating over an element $T\in\mathcal{T}_h$ gives
\begin{equation}
\int_T \kappa\nabla p\cdot\nabla w\,dx-\int_{\partial T}\kappa\nabla p\cdot \bn_{_T} \,w\,ds = \int_T fw\,dx.
\end{equation}
Summing over all elements, we get 
\begin{equation}\label{eq:sum} 
(\kappa\nabla p,\nabla w)_{\mathcal{T}_h}-\sum_{T\in\mathcal{T}_h}\int_{\partial T}\kappa\nabla p\cdot \bn_{_T} \,w\,ds=(f,w)_{\mathcal{T}_h}, 
\end{equation}
where $
(v,w)_{\mathcal{T}_h} := \sum_{T\in\mathcal{T}_h} \int_T v w\,dx.$ We shall introduce a couple of notation for the set of edges and discontinuous permeability coefficients as follows: 
\begin{equation}
\mathcal{E}_h^1 := \mathcal{E}_h^o \cup \mathcal{E}_h^{D,\partial}, \quad \beta_e = \frac{\kappa^-}{\kappa^+ + \kappa^-} \quad \mbox{ and } \quad 
\kappa_e = \frac{2\kappa^+\kappa^-}{\kappa^+ + \kappa^-}.
\end{equation}
Since $\kappa$ is continuous by \eqref{eq:kappa}, $\kappa^+=\kappa^-$ on every interior edge, so that $\beta_e=\frac12$ and $\kappa_e=\kappa|_e$, which is a function on $e$ (not a constant); on a Dirichlet edge, we set $\beta_e=1$ and $\kappa_e=\kappa|_e$. In particular, $\langle \hat h_e^{-1}\kappa_e\jump{v},\jump{w}\rangle_{\mathcal{E}_h^1}=\sum_{e\in\mathcal{E}_h^1}\hat h_e^{-1}(\kappa\jump{v},\jump{w})_{0,e}$.
The variational formulation is then given as follows: Find $P \in V_{n,h}^{\enn}$ such that 
\begin{equation}\label{eq:weak}
A_\xi(P,w) = F_\xi(w), \quad \forall w \in V_{n,h}^{\enn}, 
\end{equation}
where 
\begin{eqnarray*}
A_\xi(v,w) &:=& (\kappa\nabla v,\nabla w)_{\mathcal{T}_h} - \left\langle \av{\kappa\nabla v}_{\beta_e},\jump{w} \right \rangle_{\mathcal{E}_h^1} + \xi \left \langle \jump{v},\av{\kappa\nabla w}_{\beta_e}\right\rangle_{\mathcal{E}_h^1} \nonumber \\
&& + \alpha \left\langle \hat h_e^{-1} \kappa_e \jump{v},\jump{w} \right \rangle_{\mathcal{E}_h^1} + \tau (\nabla \cdot \kappa \nabla v, \nabla \cdot \kappa \nabla w)_{\mathcal{T}_h} \nonumber \\
&& + \gamma \left\langle \hat h_e\,\kappa\nabla v\cdot\bn,\kappa\nabla w\cdot\bn\right\rangle_{\mathcal{E}_h^{N,\partial}},
\end{eqnarray*}
where $\alpha$ and $\tau$ are the penalty parameters, $\gamma \geq 0$ is the Neumann least-squares parameter, $h_e$ is the diameter of the edge $e$, $\hat h_e:=\min\{h_e,\delta\}$ with $\delta$ given in \eqref{eq:delta}, $\xi$ determines the interior penalty variant,  
\begin{equation}
\av{\kappa\nabla v}_{\beta_e} =
\beta_e (\kappa\nabla v)^+ + 
(1-\beta_e)(\kappa\nabla v)^- \quad \mbox{ and } \quad \langle a,b\rangle_{\mathcal{E}_h^1}:=\sum_{e\in\mathcal{E}_h^1}\int_e a\cdot b\,ds. 
\end{equation}
Since $\kappa^+=\kappa^-=\kappa_e$ on each edge, it holds for every piecewise $H^{3/2+\epsilon}$ function $v$ that
\begin{equation}\label{eq:kappae} 
\av{\kappa\nabla v}_{\beta_e} =
\kappa_e \av{\nabla v}=\kappa\av{\nabla v}.
\end{equation} 
Moreover, if $\nabla v$ is continuous across $e$, in particular for $v\in V_{n,h}^{\enn}$ (since $\nabla v=\nabla v_n$ with $v_n\in C^1(\overline\Omega)$) and for $v\in H^2(\Omega)$, then $\av{\nabla v}=\nabla v$ on $e$. 
The corresponding right-hand side functional is given as follows: 
\begin{eqnarray*}
F_\xi (w) &=& (f,w)_{\mathcal{T}_h} - \langle g_N,w \rangle_{\mathcal{E}_h^{N,\partial}} + \xi \left\langle g_D, \av{\kappa\nabla w}_{\beta_e}\cdot\bn\right\rangle_{\mathcal{E}_h^{D,\partial}} + \alpha \left \langle \hat h_e^{-1}\kappa_e g_D,w \right \rangle_{\mathcal{E}_h^{D,\partial}} \\
&& - \tau (f, \nabla \cdot \kappa \nabla w)_{\mathcal{T}_h} - \gamma\left\langle \hat h_e\,g_N,\kappa\nabla w\cdot\bn\right\rangle_{\mathcal{E}_h^{N,\partial}}. 
\end{eqnarray*}
We note that $\xi$ determines the type of DG bilinear form, i.e., for $\xi = -1$, it is SIPG, for $\xi = 0$, it is IIPG while for $\xi = 1$, it is NIPG. In this paper, we use the NIPG form, $\xi=1$; the IIPG and SIPG forms are discussed in Remark~\ref{rem:ipvariants}. Since $\nabla\cdot\kappa\nabla p=-f$ and $\kappa\nabla p\cdot\bn=-g_N$ on $\Gamma_N$, the GLS and the Neumann least-squares contributions to $F_\xi$ carry a minus sign. The Neumann least-squares term is used only for mixed boundary conditions (see the remark after Theorem~\ref{thm:cea}); it plays no role if $\Gamma_N=\emptyset$, and it is not needed for the pure Neumann problem (Theorem~\ref{thm:neumann}). 
\begin{remark}
The standard DG or EG has been formulated without the Galerkin least-square stabilized term originally proposed in \cite{hughes1987new}. For the neural network enriched by the piecewise constants, this term is needed in the error analysis (see \S\ref{sec:error}), and it can also tackle the Dirichlet boundary condition as discussed in \cite{liu2026neural}. For the NIPG variant, the term is not needed for coercivity (Lemma~\ref{lem:coer}), and for a pure Neumann problem it is not needed at all (Theorem~\ref{thm:neumann}).  
\end{remark}
\subsection{Interpolation in the Enriched Linearized Neural Network Space}\label{sec:interp}
We first define a neural interpolation operator $I_n:H^r(\Omega)\mapsto\Lkn$, $2 \le r \le r_{\max}$, by
\begin{equation}\label{eq:In}
I_nv:=v_n,
\end{equation}
where $v_n$ is the simultaneous approximant of Proposition~\ref{prop:simul}. (In contrast to a
best approximation in a single norm, such as $\arg\min_{w_n}\|v-w_n\|_{H^2(\Omega)}$, $I_nv$ is
optimal in $H^1$ and $H^2$ at the same time.) Additionally, we let
\begin{equation}
Q_h^0:L^2(\Omega)\mapsto\Mzero
\end{equation}
be the $L^2$-projection onto the space $\Mzero$. We now define the neural enriched interpolation
operator $\Pi_{n,h}:H^r(\Omega)\mapsto\Vnh$ by
\begin{equation}\label{eq:Pi}
\Pi_{n,h}v:=I_nv+Q_h^0(v-I_nv).
\end{equation}
We note that, with
\begin{equation}
r_n:=v-I_nv,
\end{equation}
it holds that
\begin{equation}\label{eq:vPi}
v-\Pi_{n,h}v=r_n-Q_h^0r_n .
\end{equation}
We now define the EG norm by
\begin{equation}\label{eq:EG}
\begin{aligned}
\|w\|_{\EGn}^2:={}& \sum_{T\in\Th} \|\kappa^{1/2} \nabla w\|_{0,T}^2+\alpha\sum_{e\in\mathcal{E}_h^1}\hat h_e^{-1}\|\kappa^{1/2}  \jump w\|_{0,e}^2
+\tau \|\nabla\cdot\kappa\nabla w\|_{\mathcal{T}_h}^2\\
&+\gamma\sum_{e\in\mathcal{E}_h^{N,\partial}}\hat h_e\|\kappa\nabla w\cdot\bn\|_{0,e}^2,
\end{aligned}
\end{equation}
We also write $|w|_*^2 := \sum_{e\in\mathcal{E}_h^1}\hat h_e^{-1}\|\kappa^{1/2} \jump w\|_{0,e}^2$. 
It is a norm on $\Vnh$ when $|\Gamma_D|>0$. Namely, if $\|w\|_{\EGn}=0$, then $w$ is piecewise constant without
jumps and vanishes on $\Gamma_D$. If $\Gamma_D=\emptyset$, it is a norm on the subspace of functions with zero mean (Theorem~\ref{thm:neumann}).
\begin{remark}[Edge weights]\label{rem:weights}
The edge weights in \eqref{eq:EG} balance the edge terms against the element terms. For a finite element function $v_h$ of fixed degree on a shape-regular mesh, the inverse inequality $|v_h|_{H^2(T)}\lesssim h_T^{-1}|v_h|_{H^1(T)}$ \cite{CiarletP-1978aa} and the multiplicative trace inequality \eqref{eq:fmtrace} give the discrete trace inequality $h_e\|\nabla v_h\|_{0,e}^2\lesssim\|\nabla v_h\|_{0,T}^2$, and the Poincar\'e inequality gives $h_e^{-1}\|v-Q_h^0v\|_{0,e}^2\lesssim\|\nabla v\|_{0,T}^2$; this is why the weights are $h_e^{\pm1}$ in the standard EG and DG methods. Since $\Lkn$ satisfies no inverse inequality of this form, we use the weights $\hat h_e^{\pm1}$, $\hat h_e=\min\{h_e,\delta\}$, for which Lemma~\ref{lem:elem} below bounds the edge terms of the error by element terms with constants independent of $h$.
\end{remark}
 Since $Q_h^0r_n$ is constant on each element, we have
\begin{equation}
\nabla(v-\Pi_{n,h}v)=\nabla(r_n-Q_h^0r_n)=\nabla r_n\quad\text{on each }T\in\Th .
\end{equation}
Therefore,
\begin{equation}\label{eq:gradPi}
\sum_{T\in\Th}\|\kappa^{1/2} \nabla(v-\Pi_{n,h}v)\|_{0,T}^2=\sum_{T\in\Th} \|\kappa^{1/2} \nabla(v-I_nv)\|_{0,T}^2 .
\end{equation}
We use the multiplicative trace inequality \cite[Theorem~1.6.6]{brenner2008mathematical}, which reads as follows for shape-regular elements; for curved elements it follows in the same way, by scaling the trace theorem on a Lipschitz domain, provided the scaled elements have uniformly Lipschitz boundaries: 
\begin{equation}\label{eq:fmtrace}
\|\phi\|_{0,\partial T}^2\lesssim h_T^{-1}\|\phi\|_{0,T}^2+\|\phi\|_{0,T}|\phi|_{H^1(T)},\qquad\forall\phi\in H^1(T).
\end{equation}
In contrast to the additive form $\|\phi\|_{0,\partial T}^2\lesssim h_T^{-1}\|\phi\|_{0,T}^2+h_T|\phi|_{H^1(T)}^2$, its second term does not contain a positive power of $h_T$, which is essential on coarse elements, i.e., ($h_T > \delta$). By the assumptions on $\Th$ (shape regularity and $|e|\simeq h_T^{d-1}$), $h_e\simeq h_T$ and, thus  $\hat h_e\simeq\min\{h_T,\delta\}$ for $e\subset\partial T$.

\begin{lemma}\label{lem:elem}
Let $\rho\in H^1(\Omega)$ and $\eta:=\rho-Q_h^0\rho$. Then, for every $T\in\Th$ and every edge $e\subset\partial T$,
\begin{equation}\label{eq:etaedge}
\hat h_e^{-1}\|\eta\|_{0,e}^2\lesssim\|\nabla\rho\|_{0,T}^2+\delta^{-2}\|\rho\|_{0,T}^2 .
\end{equation}
If, in addition, $\rho|_T\in H^2(T)$, then
\begin{equation}\label{eq:edgegrad}
\hat h_e\|\nabla\rho\|_{0,e}^2\lesssim\|\nabla\rho\|_{0,T}^2+\delta\|\nabla\rho\|_{0,T}|\rho|_{H^2(T)} .
\end{equation}
\end{lemma}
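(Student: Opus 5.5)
Both estimates follow from the multiplicative trace inequality \eqref{eq:fmtrace} applied on the element $T$ containing $e$, combined with $\hat h_e\simeq\min\{h_T,\delta\}$. The two ranges $h_T\le\delta$ and $h_T>\delta$ are handled by the same elementary bounds, so I would not split into cases but use $\hat h_e\lesssim h_T$ and $\hat h_e\lesssim\delta$ simultaneously wherever convenient.

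For \eqref{eq:etaedge}, apply \eqref{eq:fmtrace} to $\phi=\eta|_T$. Since $Q_h^0\rho$ is constant on $T$, we have $|\eta|_{H^1(T)}=|\rho|_{H^1(T)}$. This gives
\begin{equation*}
\|\eta\|_{0,e}^2\lesssim h_T^{-1}\|\eta\|_{0,T}^2+\|\eta\|_{0,T}\|\nabla\rho\|_{0,T}.
\end{equation*}
Two bounds on $\|\eta\|_{0,T}$ are available. The first is the Poincar\'e inequality on star-shaped elements, $\|\eta\|_{0,T}\lesssim h_T\|\nabla\rho\|_{0,T}$, which is the stated approximation property of $Q_h^0$. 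The second is the $L^2$-stability of the projection, $\|\eta\|_{0,T}\le\|\rho\|_{0,T}$. Multiply by $\hat h_e^{-1}$ and treat the two terms separately.
\begin{itemize}
\item For the first term, write $h_T^{-1}\|\eta\|_{0,T}^2\le h_T^{-1}\|\eta\|_{0,T}\cdot\|\eta\|_{0,T}$. Using Poincar\'e on one factor and stability on the other yields $\lesssim\|\nabla\rho\|_{0,T}\|\rho\|_{0,T}$.
\item For the second term, stability gives directly $\|\eta\|_{0,T}\|\nabla\rho\|_{0,T}\le\|\rho\|_{0,T}\|\nabla\rho\|_{0,T}$.
\end{itemize}
Hence $\|\eta\|_{0,e}^2\lesssim\|\nabla\rho\|_{0,T}\|\rho\|_{0,T}$, and this bound is valid for any $h_T$. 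Multiplying by $\hat h_e^{-1}$ and applying Young's inequality in the form $\hat h_e^{-1}ab\le\frac12(a^2+\hat h_e^{-2}b^2)$ gives $\|\nabla\rho\|_{0,T}^2+\hat h_e^{-2}\|\rho\|_{0,T}^2$.

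This leaves the term $\hat h_e^{-2}\|\rho\|_{0,T}^2$, which is the main obstacle. When $h_T\ge\delta$ we have $\hat h_e\simeq\delta$, so it is exactly the desired $\delta^{-2}\|\rho\|_{0,T}^2$. When $h_T<\delta$, the factor $\hat h_e^{-2}\simeq h_T^{-2}$ is too large, and I would instead use Poincar\'e on both factors in the bound above. This gives $h_T^{-1}\|\eta\|_{0,T}^2+\|\eta\|_{0,T}\|\nabla\rho\|_{0,T}\lesssim h_T\|\nabla\rho\|_{0,T}^2$, and so $\hat h_e^{-1}\|\eta\|_{0,e}^2\lesssim\|\nabla\rho\|_{0,T}^2$. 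Thus the only genuine case distinction is $h_T$ versus $\delta$, and each case produces one of the two terms on the right of \eqref{eq:etaedge}.

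For \eqref{eq:edgegrad}, apply \eqref{eq:fmtrace} componentwise to $\phi=\nabla\rho|_T\in H^1(T)^d$:
\begin{equation*}
\hat h_e\|\nabla\rho\|_{0,e}^2\lesssim\hat h_e h_T^{-1}\|\nabla\rho\|_{0,T}^2+\hat h_e\|\nabla\rho\|_{0,T}|\rho|_{H^2(T)}.
\end{equation*}
Since $\hat h_e\lesssim h_T$, the first term is bounded by $\|\nabla\rho\|_{0,T}^2$. Since $\hat h_e\le\delta$, the second is bounded by $\delta\|\nabla\rho\|_{0,T}|\rho|_{H^2(T)}$, which is exactly the claim. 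No inverse inequality is used anywhere; the point is that the multiplicative form of the trace inequality carries no positive power of $h_T$ in its second term. This is what allows the weight $\hat h_e$ to be capped at $\delta$ on coarse elements.
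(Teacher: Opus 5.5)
Your proof is correct and follows the paper's argument. Both combine the multiplicative trace inequality \eqref{eq:fmtrace} with the Poincar\'e inequality and the $L^2$-stability of $Q_h^0$, split into $h_T\le\delta$ versus $h_T>\delta$ for \eqref{eq:etaedge} (despite your opening remark, you do end up using this same case split), and use the identical componentwise argument with $\hat h_e\lesssim\min\{h_T,\delta\}$ for \eqref{eq:edgegrad}. The only cosmetic difference is in the coarse case: you pass through the intermediate bound $\|\eta\|_{0,e}^2\lesssim\|\nabla\rho\|_{0,T}\|\rho\|_{0,T}$, whereas the paper applies stability to both factors together with $h_T^{-1}<\delta^{-1}$.
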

\begin{proof}
Since $Q_h^0\rho|_T$ is the mean value of $\rho$ on $T$, we have $\nabla\eta=\nabla\rho$ on $T$, $\|\eta\|_{0,T}\le\|\rho\|_{0,T}$, and, by the Poincar\'e inequality \cite[Chapter~4]{brenner2008mathematical},
\begin{equation}\label{eq:poinc}
\|\eta\|_{0,T}\le C_Ph_T\|\nabla\rho\|_{0,T}.
\end{equation}
By \eqref{eq:fmtrace}, $\|\eta\|_{0,e}^2\lesssim h_T^{-1}\|\eta\|_{0,T}^2+\|\eta\|_{0,T}\|\nabla\rho\|_{0,T}$. If $h_T\le\delta$, then $\hat h_e\simeq h_T$, and \eqref{eq:poinc} gives $\hat h_e^{-1}\|\eta\|_{0,e}^2\lesssim\|\nabla\rho\|_{0,T}^2$. If $h_T>\delta$, then $\hat h_e\simeq\delta$, and $\|\eta\|_{0,T}\le\|\rho\|_{0,T}$ gives $\hat h_e^{-1}\|\eta\|_{0,e}^2\lesssim\delta^{-2}\|\rho\|_{0,T}^2+\delta^{-1}\|\rho\|_{0,T}\|\nabla\rho\|_{0,T}$, which proves \eqref{eq:etaedge}. Similarly, \eqref{eq:fmtrace} with $\phi=\partial_i\rho$ gives $\hat h_e\|\nabla\rho\|_{0,e}^2\lesssim(\hat h_e/h_T)\|\nabla\rho\|_{0,T}^2+\hat h_e\|\nabla\rho\|_{0,T}|\rho|_{H^2(T)}$, and $\hat h_e\lesssim\min\{h_T,\delta\}$ gives \eqref{eq:edgegrad}. This completes the proof.
\end{proof} 
For finite element spaces, where the resolution scale is the mesh size, i.e., $\delta\simeq h$, we have $\hat h_e\simeq h_e$, and Lemma~\ref{lem:elem} reduces to the standard estimates of the EG and DG analysis \cite{lee2016locally,arnold2002unified}.

For an interior edge $e$, since $v$ and $I_nv$ are continuous, $r_n$ is also continuous, and $\jump{v-\Pi_{n,h}v}=-\jump{Q_h^0r_n}=\jump{r_n-Q_h^0r_n}$; on a Dirichlet edge, $\jump{v-\Pi_{n,h}v}=(r_n-Q_h^0r_n)\bn$. Hence \eqref{eq:etaedge} with $\rho=r_n$ and \eqref{eq:kappa-bd} give
\begin{equation}\label{eq:jumpPi}
|v-\Pi_{n,h}v|_*^2\lesssim\kappa_{\rm max}\big(\|\nabla r_n\|_0^2+\delta^{-2}\|r_n\|_0^2\big).
\end{equation}
Similarly, since $\nabla(Q_h^0r_n)=0$ on each element, \eqref{eq:kappa-bd} gives $\|\nabla\cdot\kappa\nabla(v-\Pi_{n,h}v)\|_{0,T}=\|\nabla\cdot\kappa\nabla r_n\|_{0,T}\le C_\kappa\|r_n\|_{H^2(T)}$, and \eqref{eq:edgegrad} bounds the Neumann term. Therefore,
\begin{equation}\label{eq:PiEG}
\|v-\Pi_{n,h}v\|_{\EGn}^2\lesssim(1+\alpha+\gamma)\big(\|\nabla r_n\|_0^2+\delta^{-2}\|r_n\|_0^2\big)+(\tau+\gamma\delta^2)\|r_n\|_{H^2(\Omega)}^2 .
\end{equation}
Thus the interpolation error in the enriched neural Galerkin norm is controlled by the $L^2$, $H^1$ and
$H^2$ approximation errors of the \emph{same} linearized neural network approximant. By
Proposition~\ref{prop:simul}, for $v\in H^r(\Omega)$, $2\le r\le r_{\max}$,
\begin{equation}\label{eq:PiRate}
\|v-\Pi_{n,h}v\|_{\EGn}\lesssim\big(\delta^{r-1}+\tau^{1/2} \delta^{r-2}\big)\|v\|_{H^r(\Omega)} ,
\end{equation}
for every shape-regular subdivision, independently of $h$. In particular, for a fixed $\tau>0$ and $r=r_{\max}$, the bound is of $H^2$ type,
\begin{equation}\label{eq:PiRateFixed}
\|v-\Pi_{n,h}v\|_{\EGn}\lesssim n^{-\frac12-\frac{2(k-2)+1}{2d}}\|v\|_{H^{r_{\max}}(\Omega)},
\end{equation}
while for $\tau\lesssim\delta^2$ it is of optimal $H^1$ order. The error estimate (Theorem~\ref{thm:error}) also needs a lower bound on $\tau$ to control $\tau^{-1/2}\|\eta\|_0$; since $\|\eta\|_0$ is bounded both by the $L^2$ error and, through the Poincar\'e inequality, by $h$ times the $H^1$ error, the lower bound $\min\{h,\delta\}^2$ suffices. Therefore, we shall assume that
\begin{equation}\label{cond} 
\min\{h,\delta\}^2\lesssim\tau\lesssim\delta^2=n^{-2/d}, 
\end{equation}
with no condition on the mesh size $h$. In particular, $\tau\simeq\delta^2$ is admissible for every subdivision, and for $h\lesssim\delta$ any $h^2\lesssim\tau\lesssim\delta^2$ is admissible as well. Condition \eqref{cond} is sufficient to obtain the optimal $H^1$ error rate when $|\Gamma_D| > 0$.For $|\Gamma_D| = 0$, we remark that any $\tau\geq 0$ can be used (Theorem~\ref{thm:neumann}). 


\subsection{Continuity and Coercivity of the bilinear form $A_\xi$}\label{sec:coer}
Since the conforming part is no longer a polynomial finite element space, the usual finite
element inverse inequality $h_e\|\nabla v\|_{0,e}^2\lesssim\|\nabla v\|_{0,T}^2$ \cite{CiarletP-1978aa} cannot be
applied. The NIPG form does not need it for stability; the following constant replaces it in the continuity estimate \eqref{eq:cont} and in the flux estimate of \S\ref{sec3}.

\begin{lemma}\label{lem:ctr}
Let 
\begin{equation}\label{eq:ctr}
C_{tr}(n,h):=\sup_{v\in\Lkn,\ \nabla v\ne0}
\frac{\sum_{e\in\mathcal{E}_h^1}\hat h_e\|\kappa^{1/2} \av{\nabla v}\|_{0,e}^2}{\|\kappa^{1/2}\nabla v\|_0^2+\tau\|\nabla\cdot\kappa\nabla v\|_0^2},
\end{equation}
and, if $\Gamma_N\ne\emptyset$, let the Neumann trace constant be
\begin{equation}\label{eq:cn}
C_{N}(n,h):=\sup_{v\in\Lkn,\ \nabla v\ne0}
\frac{\sum_{e\in\mathcal{E}_h^{N,\partial}}\hat h_e\|\kappa\nabla v\cdot\bn\|_{0,e}^2}{\|\kappa^{1/2}\nabla v\|_0^2+\tau\|\nabla\cdot\kappa\nabla v\|_0^2}.
\end{equation}
Then $C_{tr}(n,h)<\infty$ and $C_N(n,h)<\infty$ for $\tau \geq 0$.  
\end{lemma}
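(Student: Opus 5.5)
The supremum in \eqref{eq:ctr} and \eqref{eq:cn} is taken over the finite-dimensional space $\Lkn$, so finiteness should follow from a compactness argument on a unit sphere. The argument does not need a bound uniform in $n$ or $h$. Let $Z:=\{v\in\Lkn:\nabla v=0\}$, the constants contained in $\Lkn$. Both numerator and denominator depend only on $\nabla v$ and are therefore invariant under adding elements of $Z$. We may thus pass to the quotient $W:=\Lkn/Z$, which is still finite dimensional. On $W$ we introduce the quantity
\[
N(v):=\big(\|\kappa^{1/2}\nabla v\|_0^2+\tau\|\nabla\cdot\kappa\nabla v\|_0^2\big)^{1/2}.
\]
By the lower bound in \eqref{eq:kappa-bd} and $\tau\ge0$, we have $N(v)\ge\kappa_{\min}^{1/2}\|\nabla v\|_0$. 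Hence $N$ vanishes only when $\nabla v=0$, and it is a norm on $W$.

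The next step is to show that the numerator defines a finite seminorm on $W$. Each neuron $\phi_j$ lies in $C^1(\overline\Omega)\cap W^{2,\infty}(\Omega)$ because $k\ge2$. Consequently $\nabla v$ is continuous on $\overline\Omega$ for every $v\in\Lkn$, and $\av{\nabla v}=\nabla v$ on every edge, as noted after \eqref{eq:kappae}. The edge integrals $\hat h_e\|\kappa^{1/2}\nabla v\|_{0,e}^2$ are then finite. There are finitely many edges, and $\kappa\le\kappa_{\max}$, so
\[
S(v):=\Big(\sum_{e\in\mathcal{E}_h^1}\hat h_e\|\kappa^{1/2}\av{\nabla v}\|_{0,e}^2\Big)^{1/2}
\]
is a well-defined seminorm on $W$. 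The Neumann numerator is treated the same way, using $\|\kappa\nabla v\cdot\bn\|_{0,e}\le\kappa_{\max}\|\nabla v\|_{0,e}$.

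All seminorms on a finite-dimensional space are dominated by any norm. Concretely, the continuous function $S$ attains a finite maximum on the compact unit sphere $\{N=1\}\subset W$, and the ratio is homogeneous of degree zero, so $C_{tr}(n,h)=\max_{N(v)=1}S(v)^2<\infty$. The same argument gives $C_N(n,h)<\infty$. An alternative route is to use a basis of $W$: $S(v)^2\le(\sum|c_j|\,S(\phi_j))^2$, combined with norm equivalence $\sum|c_j|\lesssim N(v)$.

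The only delicate point is the degenerate kernel. The representation of elements of $\Lkn$ is non-unique, and some neurons are constant on $\Omega$, so the bound must not be expressed through the raw coefficients $c_j$. Passing to the quotient by $Z$, or equivalently working directly with $\nabla(\Lkn)$, resolves this. One should also check that the edge traces of $\nabla v$ are well defined on curved edges. This follows from the continuity of $\nabla v$ up to $\overline\Omega$, and alternatively from the trace inequality \eqref{eq:fmtrace} applied to $\partial_i v\in H^1(T)$.
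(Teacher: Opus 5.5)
Your proposal is correct and follows essentially the paper's argument. Both pass to the finite-dimensional quotient of $L^k_n$ modulo constants, use the lower bound $\kappa_{\min}\|\nabla v\|_0^2$ to see that the denominator is positive definite there, and note that the edge numerator is a finite quadratic form because $\nabla v\in C(\overline\Omega)$; the paper phrases the conclusion as the largest generalized eigenvalue, while you phrase it as the maximum of a continuous seminorm on the compact unit sphere, which is the same fact.
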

\begin{proof} 
By definition, it holds that 
\begin{equation}\label{eq:ctrineq}
\sum_{e\in\mathcal{E}_h^1}\hat h_e\|\kappa^{1/2} \av{\nabla v}\|_{0,e}^2\le C_{tr}(n,h)\Big(\|\kappa^{1/2}\nabla v\|_0^2
+\tau\|\nabla\cdot\kappa\nabla v\|_0^2\Big),\quad\forall v\in\Vnh .
\end{equation}
The constant $C_{tr}(n,h)$ can be understood as the largest eigenvalue of a generalized eigenvalue problem on $\Lkn$. The numerator and the denominator are quadratic forms on the finite dimensional quotient space
of $\Lkn$ modulo constants, and the denominator is positive definite there, since it is bounded below by $\kappa_{\rm min}\|\nabla v\|_0^2$; hence the supremum is the largest
generalized eigenvalue, which is finite. Both sides only depend on the neural part $v_n$ of $v=v_n+v_0$. The same argument applies to $C_N(n,h)$. This completes the proof. 
\end{proof}

Unlike the finite element case, $C_{tr}(n,h)$ is not known to be bounded uniformly in $n$, since no inverse inequality for $L^k_n$ with a constant independent of $n$ is available; see the remark after Theorem~\ref{thm:fluxerror}.
\begin{lemma}\label{lem:coer}
For $\xi=1$, i.e., NIPG, and all $\alpha,\tau,\gamma\ge0$,
\begin{equation}\label{eq:nipg}
A_1(v,v)=\|v\|_{\EGn}^2,\qquad\forall v\in\Vnh .
\end{equation}
\end{lemma}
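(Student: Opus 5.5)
The identity follows by setting $w=v$ in $A_\xi$ with $\xi=1$ and comparing term by term with \eqref{eq:EG}. For $\xi=1$ the two consistency/symmetry terms are
\[
-\left\langle \av{\kappa\nabla v}_{\beta_e},\jump{v}\right\rangle_{\mathcal{E}_h^1}+\left\langle \jump{v},\av{\kappa\nabla v}_{\beta_e}\right\rangle_{\mathcal{E}_h^1}.
\]
Since $\langle a,b\rangle_{\mathcal{E}_h^1}=\sum_e\int_e a\cdot b\,ds$ is symmetric in its two (vector-valued) arguments, these two terms cancel exactly. This cancellation is pointwise on each edge, so it needs neither a trace inequality nor any inverse estimate. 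This is why the NIPG form avoids the missing inverse inequality for $\Lkn$.

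The remaining terms of $A_1(v,v)$ are matched with \eqref{eq:EG} as follows.
\begin{itemize}
\item The volume term $(\kappa\nabla v,\nabla v)_{\mathcal{T}_h}=\sum_T\|\kappa^{1/2}\nabla v\|_{0,T}^2$.
\item The penalty term is $\alpha\langle \hat h_e^{-1}\kappa_e\jump{v},\jump{v}\rangle_{\mathcal{E}_h^1}$. Using $\kappa_e=\kappa|_e$ (from the continuity of $\kappa$, as noted after the definition of $\beta_e,\kappa_e$), it equals $\alpha\sum_{e\in\mathcal{E}_h^1}\hat h_e^{-1}\|\kappa^{1/2}\jump v\|_{0,e}^2$.
\item The GLS term gives $\tau\|\nabla\cdot\kappa\nabla v\|_{\mathcal{T}_h}^2$.
\item The Neumann least-squares term gives $\gamma\sum_{e\in\mathcal{E}_h^{N,\partial}}\hat h_e\|\kappa\nabla v\cdot\bn\|_{0,e}^2$.
\end{itemize}
Summing these four contributions yields exactly $\|v\|_{\EGn}^2$.

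The only point needing care is that every term is well defined for $v=v_n+v_0\in\Vnh$. Since $k\ge2$, $v_n\in C^1(\overline\Omega)\cap W^{2,\infty}(\Omega)$, and on each element $\nabla v=\nabla v_n$. Hence the traces $\av{\kappa\nabla v}_{\beta_e}=\kappa\nabla v_n$ on edges are bounded functions. Moreover, $\nabla\cdot\kappa\nabla v=\nabla\cdot\kappa\nabla v_n\in L^2(T)$ by \eqref{eq:kappa-bd}, and the jumps come only from $v_0$ (plus $v_n$ on Dirichlet edges). So all integrals are finite and the cancellation is legitimate.

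The result holds for all $\alpha,\tau,\gamma\ge0$ because no sign condition on these parameters is used, beyond their appearing with the same nonnegative weights in both $A_1(v,v)$ and the EG norm. No real obstacle is expected; the mild subtlety is only to check that the same edge-pairing convention is used in both the consistency and the symmetrization terms, including on Dirichlet edges where $\beta_e=1$, so that they are literally the same integral with opposite signs.
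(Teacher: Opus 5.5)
Your proposal is correct and is essentially the paper's proof. The paper writes $A_\xi(v,v)=E(v)-(1-\xi)\langle\av{\kappa\nabla v}_{\beta_e},\jump v\rangle_{\mathcal{E}_h^1}+\alpha|v|_*^2$ for general $\xi$ and notes that the cross term vanishes at $\xi=1$. This is the same symmetric-pairing cancellation and term-by-term matching you carry out, including the use of $\kappa_e=\kappa|_e$ in the penalty term.
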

\begin{proof}
For any $\xi$, the following identity holds:
\begin{equation}\label{eq:Avv}
A_\xi(v,v) = E(v)-(1-\xi)\langle\av{\kappa\nabla v}_{\beta_e},\jump v\rangle_{\mathcal{E}_h^1}+\alpha|v|_*^2 ,
\end{equation}
where $E(v):=\|\kappa^{1/2}\nabla v\|_{\Th}^2+\tau\|\nabla\cdot\kappa\nabla v\|_{\Th}^2+\gamma\sum_{e\in\mathcal{E}_h^{N,\partial}}\hat h_e\|\kappa\nabla v\cdot\bn\|_{0,e}^2$, so that $\|v\|_{\EGn}^2=E(v)+\alpha|v|_*^2$. For $\xi=1$, the cross term vanishes. This completes the proof.
\end{proof}
\begin{remark}[IIPG and SIPG]\label{rem:ipvariants}
For $\xi\in\{0,-1\}$, the Cauchy-Schwarz inequality, \eqref{eq:kappae} and Lemma~\ref{lem:ctr} give $|\langle\av{\kappa\nabla v}_{\beta_e},\jump v\rangle_{\mathcal{E}_h^1}|\le C_{tr}^{1/2}E(v)^{1/2}|v|_*$, and hence $A_\xi(v,v)\ge\frac12\|v\|_{\EGn}^2$ for $\alpha\ge(1-\xi)^2C_{tr}(n,h)$. The analysis below then carries over, but with constants depending on $\alpha\ge(1-\xi)^2C_{tr}(n,h)$, which is not known to be bounded uniformly in $n$. We therefore restrict the analysis to NIPG; the local conservation of \S\ref{sec3} holds for all three variants.
\end{remark}
For NIPG the coercivity is independent of the mesh size $h$
and of its quality, which is similar for classical NIPG \cite{riviere1999improved,riviere2001priori}. From  \eqref{eq:ctrineq}, we can also obtain the following continuity result: for $v,w\in\Vnh$,
\begin{equation}\label{eq:cont}
|A_1(v,w)|\le\Big(1+2\big(C_{tr}(n,h)/\alpha\big)^{1/2}\Big)\|v\|_{\EGn}\|w\|_{\EGn},\qquad\forall v,w\in\Vnh .
\end{equation}
The continuity constant may grow with $n$ through $C_{tr}$. The key idea is to remove this constant with  replacing $\|\cdot\|_{\EGn}$ by stronger norm. The error estimate follows by analyzing the newly introduced norm. 

\subsection{Consistency of the weak formulation \eqref{eq:weak} and its error analysis}\label{sec:error}
Assume that the exact solution $p$ is sufficiently regular, $p|_T\in H^{3/2+\epsilon}(T)$ for all
$T\in\Th$ and some $\epsilon>0$, and satisfies the interface conditions
\begin{equation}\label{note1}
\jump p=0\qquad\text{and}\qquad\jump{\kappa\nabla p}=0\qquad\text{on interior edges}.
\end{equation}
%
We present the consistency result, which is also standard in DG or EG. 
\begin{theorem}\label{thm:consistency}
Under the above regularity assumptions on $p$, $A_\xi(p,w) = F_\xi(w)$ for all $w\in\Vnh$, $\xi\in\{-1,0,1\}$ and 
$\alpha,\tau\ge0$. Therefore,
\begin{equation}\label{eq:galorth}
A_\xi(p-P,w)=0,\qquad\forall w\in\Vnh .
\end{equation}
\end{theorem}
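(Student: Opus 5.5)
The plan is to plug the exact solution $p$ into $A_\xi(\cdot,w)$ term by term, using the element-wise integration by parts identity \eqref{eq:sum}, and to check that each term reproduces the matching term of $F_\xi(w)$. The regularity $p|_T\in H^{3/2+\epsilon}(T)$ makes the traces of $\kappa\nabla p$ on edges well defined in $L^2(e)$, so all edge integrals make sense. The assumption $f\in L^2$ together with $\nabla\cdot\kappa\nabla p=-f$ gives $p\in X$, so the GLS term is also meaningful.

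First, the sum of element boundary integrals in \eqref{eq:sum} is rewritten as edge contributions via the standard DG magic formula: on an interior edge, $(\kappa\nabla p)^+\cdot\bn^+w^++(\kappa\nabla p)^-\cdot\bn^-w^-=\av{\kappa\nabla p}_{\beta_e}\cdot\jump w+\jump{\kappa\nabla p}\,\av{w}_{1-\beta_e}$, where the weighted average is the complementary one. The second term vanishes by \eqref{note1}. Boundary edges give $\kappa\nabla p\cdot\bn\,w$. On Dirichlet edges this equals $\av{\kappa\nabla p}_{\beta_e}\cdot\jump w$ with the boundary conventions. On Neumann edges it equals $-g_N w$. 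Hence
\[
(\kappa\nabla p,\nabla w)_{\Th}-\langle\av{\kappa\nabla p}_{\beta_e},\jump w\rangle_{\mathcal{E}_h^1}=(f,w)_{\Th}-\langle g_N,w\rangle_{\mathcal{E}_h^{N,\partial}} .
\]

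Next, the remaining terms. On interior edges $\jump p=0$, and on Dirichlet edges $\jump p=g_D\bn$. This gives
\[
\xi\langle\jump p,\av{\kappa\nabla w}_{\beta_e}\rangle_{\mathcal{E}_h^1}=\xi\langle g_D,\av{\kappa\nabla w}_{\beta_e}\cdot\bn\rangle_{\mathcal{E}_h^{D,\partial}}
\]
and
\[
\alpha\langle\hat h_e^{-1}\kappa_e\jump p,\jump w\rangle_{\mathcal{E}_h^1}=\alpha\langle\hat h_e^{-1}\kappa_e g_D,w\rangle_{\mathcal{E}_h^{D,\partial}} .
\]
Here $\jump w\cdot\bn=w$ on a boundary edge. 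For the GLS term, $\nabla\cdot\kappa\nabla p=-f$ element-wise in $L^2$, giving $-\tau(f,\nabla\cdot\kappa\nabla w)_{\Th}$. The term $\nabla\cdot\kappa\nabla w$ is in $L^2(T)$ because $w_n\in W^{2,\infty}$ and $\kappa\in W^{1,\infty}$. Finally, $\kappa\nabla p\cdot\bn=-g_N$ on $\Gamma_N$ gives the Neumann least-squares term $-\gamma\langle\hat h_e g_N,\kappa\nabla w\cdot\bn\rangle$. Summing these matches $F_\xi(w)$ for any $\xi,\alpha,\tau,\gamma$.

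Galerkin orthogonality \eqref{eq:galorth} then follows by subtracting \eqref{eq:weak}. The only delicate point is justifying the integration by parts and the edge traces for $p|_T\in H^{3/2+\epsilon}$ on possibly curved elements. I would handle it by the Green formula for $H({\rm div})$ fields on Lipschitz $T$: $\kappa\nabla p\in H^{1/2+\epsilon}(T)$ has an $L^2(\partial T)$ trace, and its divergence is $-f\in L^2$, so the normal trace pairing with $w|_T\in H^1(T)$ is an honest integral. Also, $\kappa$ is continuous, so $\kappa^\pm$ agree and the weighted averages are well defined.
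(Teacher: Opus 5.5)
Your proposal is correct and follows the argument the paper has in mind: the paper writes out no separate proof, but derives the element-wise identity \eqref{eq:sum} and calls consistency standard in DG/EG, and your term-by-term matching (the magic formula with $\jump{\kappa\nabla p}=0$, $\jump p=g_D\bn$ on Dirichlet edges, $\nabla\cdot\kappa\nabla p=-f$, and $\kappa\nabla p\cdot\bn=-g_N$ on $\Gamma_N$) supplies exactly those steps. Your justification of the element-wise Green formula, via $H(\mathrm{div})$ fields with $L^2$ normal traces on Lipschitz, possibly curved, elements, is a detail the paper leaves implicit.
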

We are now in a position to establish the error estimate. A C\'ea-type argument is based on
coercivity, consistency and the continuity of $A_\xi(\eta,\chi)$ for $\eta\notin\Vnh$ and $\chi\in\Vnh$. The latter cannot be obtained in
$\|\cdot\|_{\EGn}$ alone, since the edge terms of $A_\xi$ involve the traces of $\nabla\chi_n$, which require either an inverse inequality or the GLS term. We shall introduce an instrumental identity, so-called the edge identity \cite{arnold2002unified}:
\begin{lemma}\label{lem:edge}
Let $\sigma\in H(\mathrm{div};\Omega)$ have $L^2$ normal traces on all edges, and let $\eta$ be
piecewise $H^1$. Then
\begin{equation}\label{eq:faceid}
\sum_{e \in\mathcal{E}_h}\int_e\sigma\cdot\jump\eta\,ds=(\sigma,\nabla \eta)_{\mathcal{T}_h} +(\nabla\cdot\sigma,\eta). 
\end{equation}
\end{lemma}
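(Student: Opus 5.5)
The identity is proved element by element with the divergence theorem, and then the element boundary integrals are regrouped into edge sums using the jump conventions fixed above. Fix $T\in\Th$. On $T$, $\eta|_T\in H^1(T)$ and $\sigma|_T\in H(\mathrm{div};T)$. The generalized Green formula gives
\begin{equation*}
\int_T \sigma\cdot\nabla\eta\,dx+\int_T(\nabla\cdot\sigma)\,\eta\,dx=\int_{\partial T}\sigma\cdot\bn_{_T}\,\eta\,ds .
\end{equation*}
A priori the right-hand side is only the duality pairing of $H^{-1/2}(\partial T)$ with $H^{1/2}(\partial T)$. Since $\sigma\cdot\bn_{_T}$ is assumed to lie in $L^2$ on each edge, and the trace of $\eta$ lies in $L^2(\partial T)$, the pairing equals the $L^2$ integral $\sum_{e\subset\partial T}\int_e \sigma\cdot\bn_{_T}\,\eta\,ds$.

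For curved, star-shaped elements I would justify this Green formula in two steps. First, it holds for smooth $\sigma$ and $\eta$ on Lipschitz domains, and $T$ is Lipschitz by assumption. Second, pass to the limit using density of smooth functions in $H(\mathrm{div};T)$ and $H^1(T)$, together with continuity of the normal trace $H(\mathrm{div};T)\to H^{-1/2}(\partial T)$. The identification of the pairing with an $L^2$ integral under the extra trace regularity is the only delicate point. It is the step I would treat with the most care, although it is standard.

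Next, sum over all $T\in\Th$. The volume terms add up to $(\sigma,\nabla\eta)_{\Th}+(\nabla\cdot\sigma,\eta)$. Note that $\nabla\cdot\sigma$ is a global $L^2$ function, so no broken divergence is needed. Each boundary integral over $\partial T$ is then split into its edges.

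An interior edge $e=\partial T^+\cap\partial T^-$ receives one contribution from each neighbour:
\begin{equation*}
\int_e\big(\sigma^+\cdot\bn^+\eta^++\sigma^-\cdot\bn^-\eta^-\big)\,ds .
\end{equation*}
Because $\sigma\in H(\mathrm{div};\Omega)$, its normal component is single-valued, so $\sigma^+\cdot\bn^+=-\sigma^-\cdot\bn^-$ in $L^2(e)$. Writing $\sigma$ for this common trace, the sum becomes $\int_e\sigma\cdot(\eta^+\bn^++\eta^-\bn^-)\,ds=\int_e\sigma\cdot\jump\eta\,ds$. On a boundary edge there is a single contribution, $\int_e\sigma\cdot\bn_\Omega\,\eta\,ds$, which equals $\int_e\sigma\cdot\jump\eta\,ds$ by the boundary convention $\jump\eta=\eta\bn_\Omega$.

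Collecting all edges of $\mathcal{E}_h$ yields \eqref{eq:faceid}. The single-valuedness of the normal trace is the place where the global $H(\mathrm{div})$ assumption enters. I would verify it by testing $\nabla\cdot\sigma$ against functions in $C_0^\infty(T^+\cup T^-\cup e)$ and comparing the two one-sided Green formulas.
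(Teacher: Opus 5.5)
Your proof is correct and is essentially the argument the paper relies on. The paper gives no proof of its own and cites \cite{arnold2002unified}, where the identity follows from elementwise integration by parts and regrouping the boundary terms into averages and jumps; the interior normal jump of $\sigma$ vanishes because $\sigma\in H(\mathrm{div};\Omega)$, and your edge-by-edge regrouping and the justification of the $L^2$ normal-trace pairing spell this out.
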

For $\chi\in\Vnh$, the field $\kappa\nabla\chi_n$ belongs to $H(\mathrm{div};\Omega)$ since
$\kappa\in W^{1,\infty}(\Omega)$ and $\chi_n\in C^1(\overline\Omega)\cap W^{2,\infty}(\Omega)$; moreover
$\av{\kappa\nabla\chi}_{\beta_e}=\kappa\nabla\chi_n$ on every edge, and the GLS term gives
$\|\nabla\cdot\kappa\nabla\chi_n\|_0\le\tau^{-1/2}\|\chi\|_{\EGn}$. For functions $\eta$ with $\eta|_T\in H^{3/2+\epsilon}(T)$ and $\nabla\cdot\kappa\nabla\eta|_T\in L^2(T)$ for all $T\in\Th$, so that $\nabla\eta$ has $L^2$ traces on all edges, we define the augmented norm
\begin{eqnarray}\label{eq:augnorm}
|||\eta|||^2 &:=& \|\eta\|_{\EGn}^2+\alpha^{-1}\sum_{e\in\mathcal{E}_h^1}\hat h_e\|\kappa^{1/2}\av{\nabla\eta}\|_{0,e}^2 \\
&& +\tau^{-1}\|\eta\|_0^2+\gamma^{-1}\sum_{e\in\mathcal{E}_h^{N,\partial}}\hat h_e^{-1}\|\eta\|_{0,e}^2 ,\nonumber 
\end{eqnarray}
so that $\|\eta\|_{\EGn}\le|||\eta|||$; the last term is absent if $\Gamma_N=\emptyset$. We shall establish that this norm allows us to remove $\alpha$ and $C_{\rm tr}$ in the continuity-type estimate. 

\begin{theorem}\label{thm:cea}
Let $\Gamma_D\ne\emptyset$, and let $\gamma>0$ if $\Gamma_N\ne\emptyset$. Let $p\in X$ satisfy $p|_T\in H^{3/2+\epsilon}(T)$ for all $T\in\Th$ and some $\epsilon>0$ and \eqref{note1}, and let $\xi=1$ and $\alpha>0$. Then
\begin{equation}\label{eq:cea}
\|p-P\|_{\EGn}\le\big(1+\sqrt{10}\big)\inf_{z\in\Vnh}|||p-z|||,
\end{equation}
where the constant $1+\sqrt{10}$ is independent of $n$, $h$, $\tau$, $\alpha$, $\gamma$ and $\kappa$.
\end{theorem}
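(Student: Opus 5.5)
We argue by a C\'ea-type splitting. Fix $z\in\Vnh$ and write $p-P=\eta+\chi$ with $\eta:=p-z$ and $\chi:=z-P\in\Vnh$. By Lemma~\ref{lem:coer} and the Galerkin orthogonality \eqref{eq:galorth},
\begin{equation*}
\|\chi\|_{\EGn}^2=A_1(\chi,\chi)=A_1(z-p,\chi)+A_1(p-P,\chi)=-A_1(\eta,\chi).
\end{equation*}
The whole proof then reduces to a continuity bound of the form $|A_1(\eta,\chi)|\le\sqrt{10}\,|||\eta|||\,\|\chi\|_{\EGn}$ for $\chi\in\Vnh$. Given this bound, $\|\chi\|_{\EGn}\le\sqrt{10}\,|||\eta|||$. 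The triangle inequality and $\|\eta\|_{\EGn}\le|||\eta|||$ then give $\|p-P\|_{\EGn}\le(1+\sqrt{10})|||\eta|||$. Taking the infimum over $z$ yields \eqref{eq:cea}.

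For the continuity bound, I will treat $A_1(\eta,\chi)$ term by term. Each term should be a product of a quantity controlled by $|||\eta|||$ and a quantity controlled by $\|\chi\|_{\EGn}$, with no $C_{tr}$ and no negative power of $\alpha$ on the $\chi$ side.
\begin{itemize}
\item The element term $(\kappa\nabla\eta,\nabla\chi)_{\Th}$ is handled directly by the Cauchy--Schwarz inequality.
\item So are the penalty term $\alpha\langle\hat h_e^{-1}\kappa_e\jump\eta,\jump\chi\rangle$, the GLS term $\tau(\nabla\cdot\kappa\nabla\eta,\nabla\cdot\kappa\nabla\chi)_{\Th}$ and the Neumann least-squares term.
\item In the consistency term $-\langle\av{\kappa\nabla\eta}_{\beta_e},\jump\chi\rangle_{\mathcal{E}_h^1}$, I use \eqref{eq:kappae} and split the weights as $\hat h_e^{1/2}\alpha^{-1/2}\cdot\hat h_e^{-1/2}\alpha^{1/2}$. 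This bounds the term by $(\alpha^{-1}\sum\hat h_e\|\kappa^{1/2}\av{\nabla\eta}\|_{0,e}^2)^{1/2}(\alpha|\chi|_*^2)^{1/2}$. The first factor is exactly the extra term built into $|||\cdot|||$.
\end{itemize}

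The main obstacle is the NIPG term $\langle\jump\eta,\av{\kappa\nabla\chi}_{\beta_e}\rangle_{\mathcal{E}_h^1}$. It contains traces of $\nabla\chi_n$, which cannot be bounded by $\|\chi\|_{\EGn}$ without $C_{tr}$. Here I apply the edge identity, Lemma~\ref{lem:edge}, with $\sigma=\kappa\nabla\chi_n\in H(\mathrm{div};\Omega)$; on every edge $\av{\kappa\nabla\chi}_{\beta_e}=\kappa\nabla\chi_n$. The sum in \eqref{eq:faceid} runs over all of $\mathcal{E}_h$, with the boundary jump convention $\jump\eta=\eta\bn$. Removing the Neumann edges gives
\begin{equation*}
\langle\jump\eta,\kappa\nabla\chi_n\rangle_{\mathcal{E}_h^1}=(\kappa\nabla\chi,\nabla\eta)_{\Th}+(\nabla\cdot\kappa\nabla\chi_n,\eta)-\sum_{e\in\mathcal{E}_h^{N,\partial}}\int_e\kappa\nabla\chi_n\cdot\bn\,\eta\,ds .
\end{equation*}
I then bound the three pieces on the right as follows.
\begin{itemize}
\item The first piece is bounded by $\|\kappa^{1/2}\nabla\eta\|\,\|\kappa^{1/2}\nabla\chi\|$.
\item The second is bounded by $\tau^{-1/2}\|\eta\|_0\cdot\tau^{1/2}\|\nabla\cdot\kappa\nabla\chi\|_{\Th}$, using that $\nabla\cdot\kappa\nabla\chi=\nabla\cdot\kappa\nabla\chi_n$ elementwise.
\item The third is bounded by $(\gamma^{-1}\sum\hat h_e^{-1}\|\eta\|_{0,e}^2)^{1/2}(\gamma\sum\hat h_e\|\kappa\nabla\chi\cdot\bn\|_{0,e}^2)^{1/2}$; this is where $\gamma>0$ is needed when $\Gamma_N\ne\emptyset$.
\end{itemize}
This is precisely why the terms $\tau^{-1}\|\eta\|_0^2$ and the Neumann $L^2$ term appear in \eqref{eq:augnorm}.

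Collecting terms, $|A_1(\eta,\chi)|\le\sum_{i}a_ib_i$ over eight products. Each $a_i$ is one of the five square-rooted pieces of $|||\eta|||^2$, and each $b_i$ is one of the four pieces of $\|\chi\|_{\EGn}^2$. Each $b$-piece occurs exactly twice. A discrete Cauchy--Schwarz inequality then gives $\sum a_ib_i\le(\sum a_i^2)^{1/2}(\sum b_i^2)^{1/2}$. The second factor is at most $\sqrt2\|\chi\|_{\EGn}$. The first factor is at most a small multiple of $|||\eta|||$, since each $\eta$-piece also appears at most twice. Thus the constant is bounded by an absolute number independent of $n,h,\tau,\alpha,\gamma,\kappa$. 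With cruder grouping, bounding $\sum a_i^2$ by $5|||\eta|||^2$, one obtains $\sqrt{10}$, matching the statement.
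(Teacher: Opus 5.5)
Your proposal is correct and follows the paper's proof essentially step by step. It uses the same splitting $\eta=p-z$, $\chi=z-P$ with coercivity and Galerkin orthogonality, the same Cauchy--Schwarz bounds for the volume, penalty, GLS, Neumann and consistency terms, and the same edge identity with $\sigma=\kappa\nabla\chi_n$ for the $\xi$-term. The only difference is bookkeeping: the paper bounds every term by $a_i\|\chi\|_{\EGn}$ and uses $2a_1+a_2+\dots+a_7\le\sqrt{10}\,|||\eta|||$, whereas your piece-by-piece pairing gives $|A_1(\eta,\chi)|\le 2\,|||\eta|||\,\|\chi\|_{\EGn}$ and hence the overall constant $3\le 1+\sqrt{10}$. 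Note that $|||\eta|||^2$ has seven pieces, not five, with only $a_1$ appearing twice, so your ``cruder'' factor $5$ is unnecessary.
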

\begin{proof}
Let $z\in\Vnh$, $\eta:=p-z$ and $\chi:=z-P\in\Vnh$. Since $z_n\in W^{2,\infty}(\Omega)$ and $\nabla\cdot\kappa\nabla p=-f\in L^2(\Omega)$, $|||\eta|||$ is finite. By Theorem~\ref{thm:consistency} and Lemma~\ref{lem:coer},
\begin{equation}\label{eq:cea1}
\|\chi\|_{\EGn}^2= A_1(\chi,\chi)=A_1(\chi,\chi)-A_1(p-P,\chi)=-A_1(\eta,\chi).
\end{equation}
We bound the six terms of $A_1(\eta,\chi)$ by the Cauchy-Schwarz inequality. Let $a_1:=\|\kappa^{1/2}\nabla_h\eta\|_0$, $a_2:=\alpha^{1/2}|\eta|_*$, $a_3:=\tau^{1/2}\|\nabla\cdot\kappa\nabla\eta\|_{\Th}$, $a_4^2:=\alpha^{-1}\sum_e\hat h_e\|\kappa^{1/2}\av{\nabla\eta}\|_{0,e}^2$, $a_5:=\tau^{-1/2}\|\eta\|_0$, $a_6^2:=\gamma\sum_{e\in\mathcal{E}_h^{N,\partial}}\hat h_e\|\kappa\nabla\eta\cdot\bn\|_{0,e}^2$ and $a_7^2:=\gamma^{-1}\sum_{e\in\mathcal{E}_h^{N,\partial}}\hat h_e^{-1}\|\eta\|_{0,e}^2$, so that $\sum_{i=1}^7a_i^2=|||\eta|||^2$. The volume, penalty, GLS and Neumann least-squares terms are bounded by $a_1$, $a_2$, $a_3$ and $a_6$ times $\|\chi\|_{\EGn}$, respectively. By \eqref{eq:kappae} and the Cauchy-Schwarz inequality with the weights $\hat h_e^{\pm1/2}\kappa^{1/2}$, $|\langle\av{\kappa\nabla\eta}_{\beta_e},\jump\chi\rangle_{\mathcal{E}_h^1}|\le a_4\alpha^{1/2}|\chi|_*\le a_4\|\chi\|_{\EGn}$. Finally, since $\av{\kappa\nabla\chi}_{\beta_e}=\kappa\nabla\chi_n$ and $\mathcal{E}_h^1=\mathcal{E}_h\setminus\mathcal{E}_h^{N,\partial}$, Lemma~\ref{lem:edge} with $\sigma=\kappa\nabla\chi_n\in H(\mathrm{div};\Omega)$, which, as in \cite{liu2026neural} for the boundary terms, lets the GLS term replace the inverse inequality, gives
\begin{equation}\label{eq:xiN}
\langle\jump\eta,\av{\kappa\nabla\chi}_{\beta_e}\rangle_{\mathcal{E}_h^1}=(\kappa\nabla\chi_n,\nabla_h\eta)_{\Th}+(\nabla\cdot\kappa\nabla\chi_n,\eta)-\langle\eta,\kappa\nabla\chi_n\cdot\bn\rangle_{\mathcal{E}_h^{N,\partial}} .
\end{equation}
The first two terms are bounded by $(a_1+a_5)\|\chi\|_{\EGn}$, using the GLS part of $\|\chi\|_{\EGn}$, and the last one by $a_7\|\chi\|_{\EGn}$, using the Cauchy-Schwarz inequality with the weights $\hat h_e^{\mp1/2}$ and the Neumann least-squares part of $\|\chi\|_{\EGn}$.
Since $\xi=1$, the Cauchy-Schwarz inequality in $\Reals{7}$ gives
\[
|A_1(\eta,\chi)|\le\big(2a_1+a_2+a_3+a_4+a_5+a_6+a_7\big)\|\chi\|_{\EGn}\le \sqrt{10}\,|||\eta|||\,\|\chi\|_{\EGn}.
\]
By \eqref{eq:cea1}, $\|\chi\|_{\EGn}\le\sqrt{10}\,|||\eta|||$, and the triangle inequality gives
$\|p-P\|_{\EGn}\le\|\eta\|_{\EGn}+\|\chi\|_{\EGn}\le(1+\sqrt{10})|||p-z|||$. Taking the infimum over $z\in\Vnh$ completes the proof.
\end{proof}
Theorem~\ref{thm:cea} requires only the regularity $p|_T\in H^{3/2+\epsilon}(T)$ used for the consistency, and no $H^2$ regularity of $p$; the latter is used only in Theorem~\ref{thm:error} below to obtain a convergence rate. The constant $1+\sqrt{10}$ does not involve $C_{tr}(n,h)$: the only edge term containing a trace of a neural function, the $\xi$-term, is handled by Lemma~\ref{lem:edge}, the GLS term and, on $\Gamma_N$, the Neumann least-squares term, at the price of the terms $\tau^{-1/2}\|\eta\|_0$ and $a_7$ in \eqref{eq:augnorm}, which are controlled by the $L^2$ error of the simultaneous approximant and by the piecewise constant enrichment (Lemma~\ref{lem:elem} and the proof of Theorem~\ref{thm:error}). For mixed boundary conditions, the Neumann least-squares term is needed only to control the last term in \eqref{eq:xiN}, which contains the normal derivative of a neural function on $\Gamma_N$. Without it ($\gamma=0$), the Cauchy-Schwarz inequality with the weights $\hat h_e^{\mp1/2}$ and \eqref{eq:cn} give
\[
|\langle\eta,\kappa\nabla\chi_n\cdot\bn\rangle_{\mathcal{E}_h^{N,\partial}}|\le C_N(n,h)^{1/2}\Big(\sum_{e\in\mathcal{E}_h^{N,\partial}}\hat h_e^{-1}\|\eta\|_{0,e}^2\Big)^{1/2}\|\chi\|_{\EGn},
\]
so that Theorem~\ref{thm:cea} and the estimates below hold with $\sqrt{10}$ replaced by a constant proportional to $(1+C_N(n,h))^{1/2}$, which, like $C_{tr}(n,h)$, may grow with $n$. The term does not affect the conservation (\S\ref{sec3}), since $\nabla\psi_T=0$. 

We shall present the convergence rate for the full elliptic regularity case first. 
\begin{theorem}\label{thm:error}
Let $\Gamma_D\ne\emptyset$, $\gamma>0$ if $\Gamma_N\ne\emptyset$, $k\ge2$, $2\le r\le r_{\max}$, $p\in H^r(\Omega)$, and let $\tau$ satisfy \eqref{cond}. Let $\xi=1$ and $\alpha>0$. Then
\begin{equation}\label{eq:errest}
\|p-P\|_{\EGn}\lesssim\inf_{v\in\Lkn}\big(\delta^{-1}\|p-v\|_{0}+\|p-v\|_{H^1(\Omega)}+\delta|p-v|_{H^2(\Omega)}\big)
\lesssim\delta^{\,r-1}\|p\|_{H^r(\Omega)},
\end{equation}
where the constant depends on $\alpha$, $\kappa$, the constants in \eqref{cond} and the shape regularity of $\Th$, but not on
$n$ and $h$. In particular, the error in the EG norm is of $H^1$ optimal rate
$O(n^{-(r-1)/d})$ for every shape-regular subdivision, coarse or fine.
\end{theorem}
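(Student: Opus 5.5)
The plan is to combine the C\'ea-type estimate of Theorem~\ref{thm:cea} with the enriched interpolant $\Pi_{n,h}$ of \eqref{eq:Pi}. Since $p\in H^r(\Omega)\subset H^2(\Omega)$ with $r\ge2$, the hypotheses of Theorem~\ref{thm:cea} hold: $p|_T\in H^{3/2+\epsilon}(T)$, \eqref{note1} is satisfied because $p$ and $\kappa\nabla p$ have continuous traces, and $p\in X$. We therefore get $\|p-P\|_{\EGn}\le(1+\sqrt{10})|||p-z|||$ for every $z\in\Vnh$. For an arbitrary $v\in\Lkn$ we take $z=v+Q_h^0(p-v)$ and set $r_n:=p-v$. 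Then $\eta:=p-z=r_n-Q_h^0r_n$, and it remains to bound the seven pieces $a_1,\dots,a_7$ of $|||\eta|||^2$ by $\delta^{-2}\|r_n\|_0^2+\|r_n\|_{H^1}^2+\delta^2|r_n|_{H^2}^2$.

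The EG-norm pieces are handled by \eqref{eq:PiEG}, whose derivation works for any $v$ in place of $I_np$. This gives the bound $(1+\alpha+\gamma)(\|\nabla r_n\|_0^2+\delta^{-2}\|r_n\|_0^2)+(\tau+\gamma\delta^2)\|r_n\|_{H^2}^2$. Using $\tau\lesssim\delta^2$, the last term is $\lesssim\delta^2\|r_n\|_{H^2}^2$. The lower-order part of $\|r_n\|_{H^2}$ is absorbed via $\delta\le1$, which is harmless because $\delta^2\|r_n\|_{H^1}^2\le\|r_n\|_{H^1}^2$.

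The extra augmented pieces are treated next.
\begin{itemize}
\item \emph{Term $a_4$.} Since $\nabla\eta=\nabla r_n$ elementwise and $r_n\in H^2$, we have $\av{\nabla\eta}=\nabla r_n$. Then \eqref{eq:edgegrad} with Young's inequality gives $\hat h_e\|\nabla r_n\|_{0,e}^2\lesssim\|\nabla r_n\|_{0,T}^2+\delta^2|r_n|_{H^2(T)}^2$. Summing over edges, with a bounded number of edges per element, yields $a_4^2\lesssim\alpha^{-1}(\|\nabla r_n\|_0^2+\delta^2|r_n|_{H^2}^2)$.
\item \emph{Term $a_7$.} On Neumann edges we apply \eqref{eq:etaedge} directly, giving $a_7^2\lesssim\gamma^{-1}(\|\nabla r_n\|_0^2+\delta^{-2}\|r_n\|_0^2)$.
\item \emph{Term $a_5=\tau^{-1/2}\|\eta\|_0$.} This is the main obstacle, and it is where the lower bound in \eqref{cond} enters. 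On each element $\|\eta\|_{0,T}\le\min\{\|r_n\|_{0,T},\,C_Ph_T\|\nabla r_n\|_{0,T}\}$ by \eqref{eq:poinc}. If $\tau\gtrsim\delta^2$, the first bound gives $\tau^{-1}\|\eta\|_0^2\lesssim\delta^{-2}\|r_n\|_0^2$. If instead $\tau\gtrsim h^2$, the second bound gives $\tau^{-1}\|\eta\|_0^2\lesssim\|\nabla r_n\|_0^2$.
\end{itemize}
Since $\tau\gtrsim\min\{h,\delta\}^2$, at least one of these two cases holds, because $\min\{h,\delta\}^2=h^2$ or $\delta^2$. In either case $a_5^2\lesssim\delta^{-2}\|r_n\|_0^2+\|\nabla r_n\|_0^2$. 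Taking the infimum over $v$ gives the first inequality in \eqref{eq:errest}, with constants depending on $\alpha$, $\gamma$, $\kappa$ and the constants in \eqref{cond}.

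For the rate, we choose $v=I_np$ from Proposition~\ref{prop:simul}. It gives $\|r_n\|_{H^s}\lesssim\delta^{r-s}\|p\|_{H^r}$ for $s=0,1,2$ simultaneously, so each of $\delta^{-1}\|r_n\|_0$, $\|r_n\|_{H^1}$ and $\delta|r_n|_{H^2}$ is $\lesssim\delta^{r-1}\|p\|_{H^r}$. This is exactly why a single simultaneous approximant is required, rather than three separate best approximations. We note finally that no step uses a relation between $h$ and $\delta$ beyond \eqref{cond}: the coarse-versus-fine dichotomy is absorbed by $\hat h_e$ in Lemma~\ref{lem:elem} and by the two-case treatment of $a_5$. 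Hence the constant is independent of $n$ and $h$.
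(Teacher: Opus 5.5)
Your proposal is correct and follows essentially the same route as the paper: apply Theorem~\ref{thm:cea} with $z=v+Q_h^0(p-v)$ for an arbitrary $v\in\Lkn$, and bound the seven parts of $|||\eta|||$ using \eqref{eq:jumpPi}, Lemma~\ref{lem:elem} and the Poincar\'e inequality, with the same two-case use of \eqref{cond} for $a_5$; then insert the simultaneous approximant $I_np$ of Proposition~\ref{prop:simul}. The only cosmetic difference is that you bundle $a_1,a_2,a_3,a_6$ into \eqref{eq:PiEG} and absorb the lower-order part of $\|r_n\|_{H^2}$ via $\delta\le1$, whereas the paper bounds $a_3$ directly by $|\rho|_{H^2}$ and $\|\nabla\rho\|_0$.
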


\begin{proof}
We proceed in four steps.
\emph{Step 1:} Fix $v\in\Lkn$ and set
\[
\rho:=p-v\in H^2(\Omega),\qquad z:=v+Q_h^0\rho\in\Vnh,\qquad \eta:=p-z=\rho-Q_h^0\rho ;
\]
for $v=I_np$, we have $z=\Pi_{n,h}p$. Since $p\in H^r(\Omega)\subset H^2(\Omega)$ and $\kappa$ is continuous, $p$ satisfies the assumptions of Theorem~\ref{thm:cea}. Hence, with $\chi:=z-P$,
\begin{equation}\label{eq:chibound}
\|\chi\|_{\EGn}\le\sqrt{10}\,|||\eta|||,\qquad \|p-P\|_{\EGn}\le\big(1+\sqrt{10}\big)|||\eta||| ,
\end{equation}
and it remains to bound the seven parts $a_1,\dots,a_7$ of $|||\eta|||$ defined in the proof of Theorem~\ref{thm:cea}. We use that $\nabla_h\eta=\nabla\rho$ and $\nabla\cdot\kappa\nabla\eta=\nabla\cdot\kappa\nabla\rho$ on each element, since $Q_h^0\rho$ is piecewise constant. \emph{Step 2:} By \eqref{eq:kappa-bd},
\[
a_1\le\kappa_{\rm max}^{1/2}\|\nabla\rho\|_0,\qquad
a_3\le\tau^{1/2}\big(\kappa_{\rm max}|\rho|_{H^2}+\|\nabla\kappa\|_{L^\infty}\|\nabla\rho\|_0\big).
\]
\emph{Step 3:} By \eqref{eq:jumpPi}, whose proof uses only the continuity of $v$, and by \eqref{eq:etaedge} on the Neumann edges,
\[
a_2\lesssim\alpha^{1/2}\big(\|\nabla\rho\|_0+\delta^{-1}\|\rho\|_0\big),\qquad a_7\lesssim\gamma^{-1/2}\big(\|\nabla\rho\|_0+\delta^{-1}\|\rho\|_0\big).
\]
Moreover, $\|\eta\|_{0,T}\le\min\{\|\rho\|_{0,T},C_Ph_T\|\nabla\rho\|_{0,T}\}$ by \eqref{eq:poinc}, and \eqref{cond} gives $a_5=\tau^{-1/2}\|\eta\|_0\lesssim\min\{h,\delta\}^{-1}\min\{\|\rho\|_0,h\|\nabla\rho\|_0\}\le\|\nabla\rho\|_0+\delta^{-1}\|\rho\|_0$.
\emph{Step 4:} Since $\rho\in H^2(\Omega)$, $\av{\nabla\eta}=\nabla\rho$ on each edge, and \eqref{eq:edgegrad} gives
\[
a_4\lesssim(\kappa_{\rm max}/\alpha)^{1/2}\big(\|\nabla\rho\|_0+\delta|\rho|_{H^2}\big),\qquad
a_6\lesssim\gamma^{1/2}\kappa_{\rm max}\big(\|\nabla\rho\|_0+\delta|\rho|_{H^2}\big).
\]
Collecting Steps 2-4 and using \eqref{cond}, we arrive at
\begin{equation}\label{eq:etabound}
|||\eta|||\lesssim\delta^{-1}\|\rho\|_0+\|\rho\|_{H^1}+\delta|\rho|_{H^2},
\end{equation}
where the hidden constant depends on $\alpha$, $\gamma$, $\kappa_{\rm max}$, $\|\nabla\kappa\|_{L^\infty}$, $C_P$ and the constants in \eqref{cond}, but not on $h$. Together with \eqref{eq:chibound} and the infimum over $v$, this gives the first inequality in \eqref{eq:errest}. Choosing $v=I_np$, Proposition~\ref{prop:simul} gives
\[
\delta^{-1}\|p-I_np\|_0+\|p-I_np\|_{H^1}+\delta|p-I_np|_{H^2}\lesssim\delta^{r-1}\|p\|_{H^r},
\]
which is the second inequality in \eqref{eq:errest}. This completes the proof.  
\end{proof}

\begin{remark}[Role of the enrichment]
The piecewise constant part $Q_h^0\rho$ of $z$ enters the error analysis only through the Poincar\'e inequality \eqref{eq:poinc} on elements with $h_T\le\delta$ in Lemma~\ref{lem:elem}. Without it, i.e., for $z=v\in\Lkn$ and $\eta=\rho$, the jumps on interior edges vanish, but on Dirichlet edges $\hat h_e^{-1}\|\rho\|_{0,e}^2\lesssim\hat h_e^{-1}(h_T^{-1}\|\rho\|_{0,T}^2+\|\rho\|_{0,T}\|\nabla\rho\|_{0,T})$ exceeds the optimal order $\delta^{2(r-1)}$ by the factor $\delta^2/h^2$ when $h\ll\delta$. On a coarse subdivision, $h\gtrsim\delta$, the enrichment is not needed for the accuracy, and its role is the local conservation (\S\ref{sec3}).
\end{remark}
The $H^2$ regularity in Theorem~\ref{thm:error} can be relaxed. The norm $|||\cdot|||$ contains $\nabla\cdot\kappa\nabla\eta$ and the edge traces of $\nabla\eta$, but no full $H^2$ norm, and $\nabla\cdot\kappa\nabla p=-f\in L^2(\Omega)$ for every solution. We now consider the reduced regularity case. 
\begin{theorem}\label{thm:lowreg}
Let the assumptions of Theorem~\ref{thm:error} hold, except that $p\in H^s(\Omega)$ with $\frac32<s\le2$. Then
\begin{equation}\label{eq:lowreg}
\|p-P\|_{\EGn}\lesssim\delta^{\,s-1}\big(\|p\|_{H^s(\Omega)}+\|f\|_0\big),
\end{equation}
which is of optimal order $O(n^{-(s-1)/d})$ in the EG norm. 
\end{theorem}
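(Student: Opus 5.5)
We start from Theorem~\ref{thm:cea}, which only needs $p|_T\in H^{3/2+\epsilon}(T)$; this holds since $s>\frac32$. As in Step~1 of the proof of Theorem~\ref{thm:error}, we fix $v\in\Lkn$, set $\rho:=p-v$, $z:=v+Q_h^0\rho$ and $\eta:=\rho-Q_h^0\rho$, and bound the seven parts $a_1,\dots,a_7$ of $|||\eta|||$. The difference is that $\rho\notin H^2(\Omega)$, so no $|\rho|_{H^2}$ may appear. The approximant will be $v=I_n\tilde p$ for a smoothed version $\tilde p$ of $p$, chosen at scale $\delta$.

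\emph{Terms not needing $H^2$.} The terms $a_1,a_2,a_5,a_7$ are handled exactly as in Steps~2--3, because \eqref{eq:etaedge}, \eqref{eq:jumpPi} and \eqref{eq:poinc} only require $\rho\in H^1$. They are bounded by $\|\nabla\rho\|_0+\delta^{-1}\|\rho\|_0$.

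\emph{GLS term.} For $a_3$ we do not split $\nabla\cdot\kappa\nabla\rho$ into $H^2$ pieces. Instead we write $\nabla\cdot\kappa\nabla\rho=-f-\nabla\cdot\kappa\nabla v$ and use \eqref{eq:kappa-bd} for $v$. Then $a_3\le\tau^{1/2}(\|f\|_0+C_\kappa\|v\|_{H^2})$. With $\tau\lesssim\delta^2$ this gives $\delta\|f\|_0+\delta\|v\|_{H^2}$. It therefore suffices to build $v$ with $\|v\|_{H^2}\lesssim\delta^{s-2}\|p\|_{H^s}$, and then $\delta^{s-1}\le 1\cdot\delta^{s-1}$ absorbs the $\delta\|f\|_0\le\delta^{s-1}\|f\|_0$ piece, since $s\le2$ and $\delta\le1$.

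\emph{Construction of $v$.} Let $E$ be a Stein extension of $p$ to $\Reals{d}$, and let $\tilde p:=(E p)*\varphi_\delta$ restricted to $\Omega$, with a standard mollifier. Standard estimates give $\|p-\tilde p\|_{H^t}\lesssim\delta^{s-t}\|p\|_{H^s}$ for $t=0,1$. They also give $\|\tilde p\|_{H^r}\lesssim\delta^{s-r}\|p\|_{H^s}$ for $r\ge s$, in particular for $r=2$ and $r=3$ if $r_{\max}\ge3$, or for any fixed $r\in(2,r_{\max}]$; note $r_{\max}>2$. Then apply Proposition~\ref{prop:simul} to $\tilde p$ with $r=2$ and set $v=I_n\tilde p$. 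This yields $\|\tilde p-v\|_{H^t}\lesssim\delta^{2-t}\|\tilde p\|_{H^2}\lesssim\delta^{s-t}\|p\|_{H^s}$ for $t=0,1,2$. Hence $\delta^{-1}\|\rho\|_0+\|\rho\|_{H^1}\lesssim\delta^{s-1}\|p\|_{H^s}$ and $\|v\|_{H^2}\lesssim\delta^{s-2}\|p\|_{H^s}$.

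\emph{Edge-gradient terms.} The main obstacle is $a_4$ and $a_6$, which involve traces $\hat h_e\|\nabla\rho\|_{0,e}^2$ of a non-$H^2$ function. We split $\nabla\rho=\nabla(p-\tilde p)+\nabla(\tilde p-v)$. For the smooth part, \eqref{eq:edgegrad} applies and gives $\|\nabla(\tilde p-v)\|_0^2+\delta\|\nabla(\tilde p-v)\|_0|\tilde p-v|_{H^2}\lesssim\delta^{2(s-1)}$. For $w:=p-\tilde p\in H^s$, we apply the multiplicative trace inequality \eqref{eq:fmtrace} to $\nabla w$ in its fractional form. Write $\sigma:=s-1\in(\frac12,1]$. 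The trace estimate $\|\phi\|_{0,\partial T}^2\lesssim h_T^{-1}\|\phi\|_{0,T}^2+h_T^{2\sigma-1}|\phi|_{H^{\sigma}(T)}^2$ is not uniform in the coarse regime. We therefore use the global, $\delta$-scaled interpolation form instead. Summing over edges, with $\hat h_e\le\delta$ and $\hat h_e\le h_T$, gives $\sum_e\hat h_e\|\nabla w\|_{0,e}^2\lesssim\|\nabla w\|_0^2+\delta\|\nabla w\|_{0}^{2-1/\sigma}\|\nabla w\|_{H^\sigma}^{1/\sigma}$. This follows by applying \eqref{eq:fmtrace} to $\nabla(w*\varphi_{\delta})$ and bounding the remainder through the trace theorem in $H^{\sigma}$ with $\sigma>\frac12$. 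Inserting $\|\nabla w\|_0\lesssim\delta^{s-1}$ and $\|\nabla w\|_{H^{\sigma}}\lesssim\|p\|_{H^s}$ makes the exponents match: $\delta\cdot\delta^{(s-1)(2-1/\sigma)}=\delta^{2(s-1)}$, since $\sigma=s-1$. Verifying this fractional trace step with constants independent of $h$ on curved elements is where I expect the real work to lie.

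Collecting all bounds, $|||\eta|||\lesssim\delta^{s-1}(\|p\|_{H^s}+\|f\|_0)$, and Theorem~\ref{thm:cea} gives \eqref{eq:lowreg}.
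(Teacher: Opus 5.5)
Up to the rough part of the edge-gradient terms, your proposal is the paper's proof. You use the same mollification at scale $\delta$ with $v=I_n\tilde p$ from Proposition~\ref{prop:simul} at $r=2$, the same bounds for $a_1,a_2,a_5,a_7$, the identity $\nabla\cdot\kappa\nabla\rho=-f-\nabla\cdot\kappa\nabla v$ for $a_3$, and \eqref{eq:edgegrad} for $\tilde p-v$.

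\textbf{The gap.} It is the step you flagged yourself. Your inequality $\sum_e\hat h_e\|\phi\|_{0,e}^2\lesssim\|\phi\|_0^2+\delta\|\phi\|_0^{2-1/\sigma}\|\phi\|_{H^\sigma}^{1/\sigma}$ is true, and your exponent count is right. However, the route you sketch for it does not prove it. After one mollification, the remainder $\psi=\nabla(w-w*\varphi_\delta)$ satisfies $\|\psi\|_0\lesssim\delta^{\sigma}\|p\|_{H^s}$. Its $H^\sigma$ seminorm, though, is only bounded by $\|p\|_{H^s}$, with no power of $\delta$. On an element with $h_T\gg\delta$, the $H^\sigma$ trace theorem then gives only $\delta h_T^{2\sigma-1}|\psi|_{H^\sigma(T)}^2$. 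For $h_T\simeq1$ this is $O(\delta)$ rather than $O(\delta^{2(s-1)})$, which is exactly the non-uniformity you noted one sentence earlier, so the reduction is circular. Using $H^t$ with $t>\frac12$ plus interpolation does not rescue it either: that yields only $\delta^{2(s-1)-\epsilon}$. The exponent $1/\sigma$ is the endpoint $t=\frac12$, where the trace map is unbounded on $H^{1/2}$, so no single split can reach it.

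\textbf{The fix.} The missing idea is to iterate the split over all dyadic scales, which is what the paper does. Set $\delta_j=2^{-j}\delta$ and $g_j=p_{\delta_{j+1}}-p_{\delta_j}$, so that $p-p_\delta=\sum_{j\ge0}g_j$. Each $g_j$ is smooth at its own scale, with $\|\nabla g_j\|_0\lesssim\delta_j^{s-1}\|p\|_{H^s}$ and $|g_j|_{H^2}\lesssim\delta_j^{s-2}\|p\|_{H^s}$. Hence \eqref{eq:fmtrace} applies to $\partial_i g_j$ and gives $\sum_e\hat h_e\|\nabla g_j\|_{0,e}^2\lesssim 2^{-j(2s-3)}\delta^{2s-2}\|p\|_{H^s}^2$. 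The triangle inequality in the weighted edge norm then gives a geometric series, which converges precisely because $s>\frac32$. This is in effect a proof of the $B^{1/2}_{2,1}$ trace bound that underlies your inequality.

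There are two alternatives. You could cite the trace theorem on $B^{1/2}_{2,1}=(L^2,H^\sigma)_{1/(2\sigma),1}$, together with the interpolation inequality, on a reference element and then scale. Or, for elements whose boundaries stay uniformly Lipschitz at scale $\delta$, you could localize the additive $H^\sigma$ trace estimate to boundary patches of diameter $\delta$ instead of $h_T$. That gives $\delta\|\phi\|_{0,\partial T}^2\lesssim\|\phi\|_{0,T}^2+\delta^{2\sigma}|\phi|_{H^\sigma(T)}^2$ directly. Any of these closes the gap, but as written the one genuinely new step of the theorem is not established.
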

\begin{proof}
We follow the proof of Theorem~\ref{thm:error}, with $p$ replaced in the approximation step by a mollified function. \emph{Step 1:} Since $s>\frac32$, $\nabla p$ has $L^2$ traces on all edges, and $p$ satisfies the assumptions of Theorem~\ref{thm:cea}. Let
\begin{equation*}
p_\delta:=(\omega_\delta*Ep)|_\Omega,\qquad v:=I_np_\delta\in\Lkn,
\end{equation*}
where $E$ is Stein's extension operator \cite{stein1970singular} and $\omega_\delta$ is a standard mollifier with support in the ball of radius $\delta$. As in the proof of Theorem~\ref{thm:error}, set $\rho:=p-v$, $z:=v+Q_h^0\rho\in\Vnh$ and $\eta:=p-z=\rho-Q_h^0\rho$. By \eqref{eq:chibound}, it suffices to bound the parts $a_1,\dots,a_7$ of $|||\eta|||$. \emph{Step 2:} By the standard properties of mollification,
\begin{eqnarray}\label{eq:moll}
&& \|p-p_\delta\|_{0}\lesssim\delta^{s}\|p\|_{H^s}, \qquad \|p-p_\delta\|_{H^1}\lesssim\delta^{s-1}\|p\|_{H^s},\\ 
&& |p-p_\delta|_{H^s}\lesssim\|p\|_{H^s},\qquad \|p_\delta\|_{H^2}\lesssim\delta^{s-2}\|p\|_{H^s}. \nonumber
\end{eqnarray}
By Proposition~\ref{prop:simul} with $r=2$ and \eqref{eq:moll},
\begin{equation}\label{eq:pdv}
\|p_\delta-v\|_{H^m}\lesssim\delta^{2-m}\|p_\delta\|_{H^2}\lesssim\delta^{s-m}\|p\|_{H^s},\quad m=0,1,2,
\end{equation}
and hence $\|v\|_{H^2}\lesssim\delta^{s-2}\|p\|_{H^s}$.
In particular, $\|\nabla\rho\|_0\le\|\nabla(p-p_\delta)\|_0+\|\nabla(p_\delta-v)\|_0\lesssim\delta^{s-1}\|p\|_{H^s}$ and, in the same way, $\|\rho\|_0\lesssim\delta^{s}\|p\|_{H^s}$.
\emph{Step 3:} The bounds for $a_1$, $a_2$, $a_5$ and $a_7$ in the proof of Theorem~\ref{thm:error} use only $\|\nabla\rho\|_0+\delta^{-1}\|\rho\|_0$. Hence, by Step 2,
\[
a_1+a_2+a_5+a_7\lesssim\|\nabla\rho\|_0+\delta^{-1}\|\rho\|_0\lesssim\delta^{s-1}\|p\|_{H^s}.
\]
\emph{Step 4:} Here $|\rho|_{H^2}$ is not available, but $\nabla\cdot\kappa\nabla p=-f$. Hence, on each element, $\nabla\cdot\kappa\nabla\eta=-f-\nabla\cdot\kappa\nabla v$, and by \eqref{eq:pdv}, $\tau^{1/2}\lesssim\delta\le1$ and $s\le2$,
\[
a_3\le\tau^{1/2}\big(\|f\|_0+C_\kappa\|v\|_{H^2}\big)\lesssim\delta\|f\|_0+\delta^{s-1}\|p\|_{H^s}\le\delta^{s-1}\big(\|f\|_0+\|p\|_{H^s}\big).
\]
\emph{Step 5:} As before, $\av{\nabla\eta}=\nabla\rho$ on each edge, and we split $\rho=(p-p_\delta)+(p_\delta-v)$. For the smooth part, i.e., $(p_\delta - v)$, \eqref{eq:edgegrad} and \eqref{eq:pdv} give
\begin{equation*}
\sum_e\hat h_e\|\nabla(p_\delta-v)\|_{0,e}^2\lesssim\|\nabla(p_\delta-v)\|_0^2+\delta\|\nabla(p_\delta-v)\|_0|p_\delta-v|_{H^2}\lesssim\delta^{2(s-1)}\|p\|_{H^s}^2 .
\end{equation*}
For the rough part, i.e., $(p - p_\delta)$, we use a standard dyadic (Littlewood--Paley type) decomposition, see, e.g., \cite{devore1993constructive}. Let $\delta_j:=2^{-j}\delta$ and $g_j:=p_{\delta_{j+1}}-p_{\delta_j}$, $j\ge0$, where $p_\varepsilon:=(\omega_\varepsilon*Ep)|_\Omega$. Then $p-p_\delta=\sum_{j\ge0}g_j$ in $H^s(\Omega)$, hence, since $s>\frac32$, also for the traces of the gradients on every edge, and the standard properties of mollification \cite{devore1993constructive}, give 
\begin{equation} 
\|\nabla g_j\|_0\lesssim\delta_j^{s-1}\|p\|_{H^s},\qquad |g_j|_{H^2}\lesssim\delta_j^{s-2}\|p\|_{H^s}.
\end{equation} 
By \eqref{eq:fmtrace} with $\phi=\partial_ig_j$ and $\hat h_e\lesssim\min\{h_T,\delta\}$,
\begin{eqnarray*}
\sum_e\hat h_e\|\nabla g_j\|_{0,e}^2 &\lesssim& \|\nabla g_j\|_0^2+\delta\|\nabla g_j\|_0|g_j|_{H^2} \\
&\lesssim& \big(\delta_j^{2s-2}+\delta\,\delta_j^{2s-3}\big)\|p\|_{H^s}^2\lesssim2^{-j(2s-3)}\delta^{2s-2}\|p\|_{H^s}^2 .
\end{eqnarray*}
Since $s>\frac32$, the triangle inequality for the weighted edge norm and the geometric series give
\[
\Big(\sum_e\hat h_e\|\nabla(p-p_\delta)\|_{0,e}^2\Big)^{1/2}\le\sum_{j\ge0}\Big(\sum_e\hat h_e\|\nabla g_j\|_{0,e}^2\Big)^{1/2}\lesssim\frac{\delta^{s-1}}{1-2^{-(s-3/2)}}\|p\|_{H^s}.
\]
Hence $a_4\lesssim\delta^{s-1}\|p\|_{H^s}$ and, in the same way, $a_6\lesssim\delta^{s-1}\|p\|_{H^s}$.
Collecting Steps 3--5 gives $|||\eta|||\lesssim\delta^{s-1}(\|p\|_{H^s}+\|f\|_0)$, and \eqref{eq:chibound} gives \eqref{eq:lowreg}, which completes the proof. 
\end{proof}

\begin{remark}[Comparison with \cite{liu2026neural}]\label{rem:liu}
For the GLS-unsymmetric Nitsche method on $\Lkn$ with fixed $\tau$, \cite{liu2026neural} proves a C\'ea estimate with constant $O(\tau^{-1/2})$ and the rate $n^{-(r-2)/d}$ in the $H^2$-type energy norm for $p\in H^r$, $r\ge2$. On the other hand, the current work established that the EG norm error is of the optimal $H^1$ rate, although the EG norm contains the $H^2$-type GLS term and it remains true for solutions in $H^s$, $s>\frac32$, with the rate $n^{-(s-1)/d}$ (Theorem~\ref{thm:lowreg}). In addition, the ENN flux is locally conservative. For fixed $\tau$, the rate in \cite{liu2026neural} is optimal for its $H^2$-type energy norm. With $\tau$ scaled as in \eqref{cond} and a simultaneous approximant, the same mechanism yields the optimal $H^1$ rate, and the argument above also applies to the method of \cite{liu2026neural}. If $\tau\simeq\delta^2$, then Proposition~\ref{prop:simul} and the multiplicative trace inequality $\|\phi\|_{0,\partial\Omega}^2\lesssim\|\phi\|_0\|\phi\|_{H^1}$ for all $\phi \in H^1(\Omega)$, give the optimal $H^1$ rate: $\|p-P_L\|_L\lesssim\delta^{r-1}\|p\|_{H^r(\Omega)}$, where $\|\cdot\|_L$ is the energy norm of \cite{liu2026neural}.
\end{remark}

\paragraph{The method of \cite{liu2026neural}} For $\Gamma_D=\partial\Omega$ and the diffusion operator considered here (\cite{liu2026neural} treats more general second-order operators), the GLS--unsymmetric Nitsche method of \cite{liu2026neural} seeks $P_L\in\Lkn$ with $B_L(P_L,w)=\ell_L(w)$ for all $w\in\Lkn$, where
\[
\begin{aligned}
B_L(v,w):={}&(\kappa\nabla v,\nabla w)-\langle\kappa\nabla v\cdot\bn,w\rangle_{\partial\Omega}+\langle v,\kappa\nabla w\cdot\bn\rangle_{\partial\Omega}\\
&+\beta_L\langle v,w\rangle_{\partial\Omega}+\tau(\nabla\cdot\kappa\nabla v,\nabla\cdot\kappa\nabla w),
\end{aligned}
\]
$\ell_L$ is the consistent load functional, and $\beta_L,\tau>0$ are fixed. This is \eqref{eq:weak} with $\xi=1$ restricted to $\Lkn$, where the interior edge terms vanish, with the penalty $\alpha\hat h_e^{-1}$ replaced by $\beta_L$. Let $\|v\|_L^2:=\|\kappa^{1/2}\nabla v\|_0^2+\beta_L\|v\|_{0,\partial\Omega}^2+\tau\|\nabla\cdot\kappa\nabla v\|_0^2$.

\begin{corollary}\label{cor:liu}
Let $\Gamma_D=\partial\Omega$, $k\ge2$, $2\le r\le r_{\max}$ and $p\in H^r(\Omega)$. Then
\begin{equation}\label{eq:liu}
\|p-P_L\|_L\lesssim\delta^{r-1}\Big(1+(\beta_L\delta)^{1/2}+(\beta_L\delta)^{-1/2}+\frac{\tau^{1/2}}{\delta}+\frac{\delta}{\tau^{1/2}}\Big)\|p\|_{H^r(\Omega)} .
\end{equation}
In particular, the error is of the optimal order $O(n^{-(r-1)/d})$ if $\beta_L\simeq\delta^{-1}$ and $\tau\simeq\delta^2$, and only of the order $O(n^{-(r-2)/d})$ if $\beta_L$ and $\tau$ are fixed.
\end{corollary}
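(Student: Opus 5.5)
The plan is to repeat the argument of Theorem~\ref{thm:cea} in the simpler setting of $\Lkn$ with $\Gamma_D=\partial\Omega$. Here there are no interior edges and no piecewise constant enrichment. The Dirichlet edge penalty $\alpha\hat h_e^{-1}$ is replaced by $\beta_L$, and $\hat h_e$ by the neural scale $\delta$.

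First, coercivity. Since the two boundary flux terms cancel for $v=w$, we have $B_L(v,v)=\|v\|_L^2$, as in Lemma~\ref{lem:coer}. Consistency $B_L(p-P_L,w)=0$ holds for $p\in H^2(\Omega)$ exactly as in Theorem~\ref{thm:consistency}. Then, for $z\in\Lkn$, set $\eta:=p-z$ and $\chi:=z-P_L$. As in \eqref{eq:cea1}, $\|\chi\|_L^2=-B_L(\eta,\chi)$.

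We bound the terms of $B_L(\eta,\chi)$ as follows.
\begin{itemize}
\item The volume, penalty and GLS terms are bounded by $\|\kappa^{1/2}\nabla\eta\|_0$, $\beta_L^{1/2}\|\eta\|_{0,\partial\Omega}$ and $\tau^{1/2}\|\nabla\cdot\kappa\nabla\eta\|_0$, each times $\|\chi\|_L$.
\item The term $\langle\kappa\nabla\eta\cdot\bn,\chi\rangle_{\partial\Omega}$ is bounded by $\beta_L^{-1/2}\|\kappa\nabla\eta\cdot\bn\|_{0,\partial\Omega}\,\|\chi\|_L$.
\item The term containing the trace of the neural function, $\langle\eta,\kappa\nabla\chi\cdot\bn\rangle_{\partial\Omega}$, is rewritten by Green's identity (Lemma~\ref{lem:edge} with $\sigma=\kappa\nabla\chi$ on the single element $\Omega$) as $(\kappa\nabla\chi,\nabla\eta)+(\nabla\cdot\kappa\nabla\chi,\eta)$. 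This is bounded by $\big(\|\kappa^{1/2}\nabla\eta\|_0+\tau^{-1/2}\|\eta\|_0\big)\|\chi\|_L$.
\end{itemize}
This yields
\[
\|p-P_L\|_L\lesssim\|\nabla\eta\|_0+\beta_L^{1/2}\|\eta\|_{0,\partial\Omega}+\beta_L^{-1/2}\|\nabla\eta\|_{0,\partial\Omega}+\tau^{1/2}\|\nabla\cdot\kappa\nabla\eta\|_0+\tau^{-1/2}\|\eta\|_0 ,
\]
with a constant independent of $n$, $\beta_L$ and $\tau$.

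Second, choose $z=I_np$ from Proposition~\ref{prop:simul}, so that $\|\eta\|_{H^s}\lesssim\delta^{r-s}\|p\|_{H^r}$ for $s=0,1,2$. The volume term is $O(\delta^{r-1})$. The GLS term is bounded by $\tau^{1/2}C_\kappa\|\eta\|_{H^2}\lesssim(\tau^{1/2}/\delta)\delta^{r-1}$. The $L^2$ term gives $\tau^{-1/2}\delta^r=(\delta/\tau^{1/2})\delta^{r-1}$.

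For the boundary terms we use the global multiplicative trace inequality $\|\phi\|_{0,\partial\Omega}^2\lesssim\|\phi\|_0\|\phi\|_{H^1}$. With $\phi=\eta$, it gives $\|\eta\|_{0,\partial\Omega}\lesssim\delta^{r-1/2}$, hence $\beta_L^{1/2}\|\eta\|_{0,\partial\Omega}\lesssim(\beta_L\delta)^{1/2}\delta^{r-1}$. With $\phi=\partial_i\eta$, it gives $\|\nabla\eta\|_{0,\partial\Omega}\lesssim\delta^{r-3/2}$, hence $\beta_L^{-1/2}\|\nabla\eta\|_{0,\partial\Omega}\lesssim(\beta_L\delta)^{-1/2}\delta^{r-1}$. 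Summing these contributions gives \eqref{eq:liu}.

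The two particular cases then follow by inspecting the bracket. For $\beta_L\simeq\delta^{-1}$ and $\tau\simeq\delta^2$ the bracket is $O(1)$, giving the optimal order. For fixed $\beta_L$ and $\tau$ the dominant factors are $\tau^{1/2}/\delta$ and $(\beta_L\delta)^{-1/2}\simeq\delta^{-1/2}$, which give $O(\delta^{r-2})$.

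The main obstacle is the neural normal-derivative trace $\langle\eta,\kappa\nabla\chi\cdot\bn\rangle$, since no inverse inequality is available for $\Lkn$. Green's identity is what removes it, so the key point is checking that $\kappa\nabla\chi\in H(\mathrm{div};\Omega)$ with $L^2$ boundary traces. This holds because $\chi\in C^1(\overline\Omega)\cap W^{2,\infty}(\Omega)$ for $k\ge2$. The remaining care is that a single approximant serves all three norms simultaneously, which is provided by Proposition~\ref{prop:simul}.
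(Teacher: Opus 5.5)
Your proposal is correct and follows the paper's proof essentially step for step: coercivity $B_L(\chi,\chi)=\|\chi\|_L^2$ and Galerkin orthogonality, Green's identity with $\sigma=\kappa\nabla\chi$ plus the GLS term to remove the neural normal trace, and then the simultaneous approximant $I_np$ combined with the multiplicative trace inequality to bound the resulting five terms. One small imprecision: for fixed $\beta_L$ and $\tau$ the only dominant factor is $\tau^{1/2}/\delta\simeq\delta^{-1}$, since $(\beta_L\delta)^{-1/2}\simeq\delta^{-1/2}$ contributes just $O(\delta^{r-3/2})$; the conclusion $O(\delta^{r-2})$ is unaffected.
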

\begin{proof}
The proof of Theorem~\ref{thm:cea} uses only the coercivity, the Galerkin orthogonality and Lemma~\ref{lem:edge} with $\sigma=\kappa\nabla\chi_n$. It therefore applies on the subspace $\Lkn$, where $\chi=\chi_n$ and $\jump\chi=0$ on interior edges, with $\alpha\hat h_e^{-1}$ replaced by $\beta_L$; here $B_L(\chi,\chi)=\|\chi\|_L^2$. With $\eta:=p-I_np$, it gives $\|p-P_L\|_L\lesssim M(\eta)$, where
\[
M(\eta):=\|\kappa^{1/2}\nabla\eta\|_0+\beta_L^{1/2}\|\eta\|_{0,\partial\Omega}+\beta_L^{-1/2}\|\kappa^{1/2}\nabla\eta\|_{0,\partial\Omega}+\tau^{1/2}\|\nabla\cdot\kappa\nabla\eta\|_0+\tau^{-1/2}\|\eta\|_0
\]
collects the terms $a_1,\dots,a_5$ of $|||\eta|||$. By Proposition~\ref{prop:simul}, $\|\eta\|_{H^m}\lesssim\delta^{r-m}\|p\|_{H^r}$ for $m=0,1,2$, and by the trace inequality $\|\phi\|_{0,\partial\Omega}^2\lesssim\|\phi\|_0\|\phi\|_{H^1}$ \cite[Theorem~1.6.6]{brenner2008mathematical}, $\|\eta\|_{0,\partial\Omega}\lesssim\delta^{r-1/2}\|p\|_{H^r}$ and $\|\nabla\eta\|_{0,\partial\Omega}\lesssim\delta^{r-3/2}\|p\|_{H^r}$. Inserting these bounds into $M(\eta)$ gives \eqref{eq:liu}.
\end{proof}
The multiplicative trace bounds are where the simultaneous approximation is essential: $\|\nabla\eta\|_{0,\partial\Omega}$ requires $\nabla\eta$ to be small in $L^2$ and in $H^1$ for the same $\eta$. The scalings $\beta_L\simeq\delta^{-1}$ and $\tau\simeq\delta^2$ are those of the ENN method with $\hat h_e=\delta$ on a coarse subdivision; the ENN method adds the local conservation on an arbitrary subdivision. 



Finally, we consider the pure Neumann condition, $\Gamma_D = \emptyset$. Solvability requires 
$\int_\Omega f\,dx=\int_{\partial\Omega}g_N\,ds$, and $p$ is determined up to a constant. Since $1\in\Vnh$,
$A_\xi(\cdot,1)=0$ and $F_\xi(1)=0$, and we solve \eqref{eq:weak} in $\{v\in\Vnh:\int_\Omega v\,dx=0\}$, where $\|\cdot\|_{\EGn}$ is a norm by the broken Poincar\'e inequality \cite{hong2025sharpness}.
\begin{theorem}[pure Neumann problem]\label{thm:neumann}
Let $\Gamma_D=\emptyset$, $\xi=1$, $\alpha>0$, $\tau\ge0$ and $\gamma=0$. Then
\[
\|p-P\|_{\EGn}\lesssim\|\nabla(p-I_np)\|_0+(\delta+\tau^{1/2})|p-I_np|_{H^2}\lesssim\big(\delta^{r-1}+\tau^{1/2}\delta^{r-2}\big)\|p\|_{H^r}
\]
for every shape-regular subdivision. In particular, the rate is of optimal $H^1$ order for $0\le\tau\lesssim\delta^2$.
\end{theorem}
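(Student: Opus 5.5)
The plan is to run a Céa argument in which the comparison function is purely neural, $z:=I_np+c$ with a constant $c$ fixed by $\int_\Omega(p-z)\,dx=0$. The advantage is that $\eta:=p-z$ is then globally $H^2$ and continuous. With $\Gamma_D=\emptyset$ we have $\mathcal{E}_h^1=\mathcal{E}_h^o$, so $\jump\eta=0$ on every edge of $\mathcal{E}_h^1$ and the penalty part $a_2$ vanishes. The troublesome $\xi$-term $\langle\jump\eta,\av{\kappa\nabla\chi}_{\beta_e}\rangle_{\mathcal{E}_h^1}$, which in Theorem~\ref{thm:cea} forced the GLS term and the lower bound on $\tau$, also vanishes identically. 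This is why any $\tau\ge0$ and $\gamma=0$ are allowed.

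First, I fix the zero-mean normalization of $P$ and take $\chi:=z-P$ in the zero-mean subspace. Shifting by a constant changes neither $A_1$ nor the EG norm, since $A_1(\cdot,1)=0$ and all terms see only gradients or jumps. Consistency (Theorem~\ref{thm:consistency}, which needs only the regularity of $p$ and holds with $\Gamma_D=\emptyset$) and coercivity (Lemma~\ref{lem:coer}) then give, as in \eqref{eq:cea1},
\[
\|\chi\|_{\EGn}^2=-A_1(\eta,\chi).
\]
Second, I bound the surviving terms of $A_1(\eta,\chi)$. The volume term is at most $\|\kappa^{1/2}\nabla\eta\|_0\|\chi\|_{\EGn}$. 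The GLS term is at most $\tau^{1/2}\|\nabla\cdot\kappa\nabla\eta\|_0\|\chi\|_{\EGn}\lesssim\tau^{1/2}\|\eta\|_{H^2}\|\chi\|_{\EGn}$ by \eqref{eq:kappa-bd}. For the consistency term $\langle\av{\kappa\nabla\eta},\jump\chi\rangle_{\mathcal{E}_h^o}$, Cauchy--Schwarz with weights $\hat h_e^{\pm1/2}$ gives the bound $\alpha^{-1/2}\big(\sum_e\hat h_e\|\kappa^{1/2}\nabla\eta\|_{0,e}^2\big)^{1/2}\|\chi\|_{\EGn}$. By \eqref{eq:edgegrad} and Young's inequality this is at most $\alpha^{-1/2}\big(\|\nabla\eta\|_0+\delta|\eta|_{H^2}\big)\|\chi\|_{\EGn}$, up to constants. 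Since $\gamma=0$ and $\eta$ is continuous, no Neumann term appears. Together these give
\[
\|\chi\|_{\EGn}\lesssim\|\nabla\eta\|_0+(\delta+\tau^{1/2})|\eta|_{H^2}+\tau^{1/2}\|\nabla\eta\|_0,
\]
where the lower-order part $\|\nabla\kappa\|_{L^\infty}\|\nabla\eta\|_0$ of the GLS bound is absorbed for bounded $\tau$.

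Third, I apply the triangle inequality $\|p-P\|_{\EGn}\le\|\eta\|_{\EGn}+\|\chi\|_{\EGn}$, with $\|\eta\|_{\EGn}^2=\|\kappa^{1/2}\nabla\eta\|_0^2+\tau\|\nabla\cdot\kappa\nabla\eta\|_0^2$ because the jumps vanish. Since $\nabla\eta=\nabla(p-I_np)$ and $|\eta|_{H^2}=|p-I_np|_{H^2}$, this yields the first inequality. Proposition~\ref{prop:simul} then gives the rate $\delta^{r-1}+\tau^{1/2}\delta^{r-2}$.

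The main obstacle I expect is bookkeeping rather than analysis. I must justify that the zero-mean normalization is compatible with Galerkin orthogonality, and that $P$ is uniquely defined there via the broken Poincaré inequality. I also need to check that the $\|\nabla\kappa\|_{L^\infty}\|\nabla\eta\|_0$ part of the GLS term is harmless; this needs $\tau\lesssim1$, or else I keep it inside $\tau^{1/2}\|p-I_np\|_{H^2}$, which is still of the stated order. Crucially, no $\tau^{-1/2}\|\eta\|_0$ term arises, because the $\xi$-term vanishes.
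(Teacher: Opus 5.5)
Your proposal is correct and follows the paper's proof essentially step for step. As in the paper, the continuous comparison function $I_np$, shifted by a constant, removes the penalty and $\xi$-terms, the consistency term is bounded via \eqref{eq:edgegrad}, and the GLS term is bounded by Cauchy--Schwarz. Your extra care with the lower-order part $\|\nabla\kappa\|_{L^\infty}\|\nabla\eta\|_0$ of $\|\nabla\cdot\kappa\nabla\eta\|_0$, which the paper silently writes as $C_\kappa|\eta|_{H^2}$, is justified: the first inequality as stated needs $\tau\lesssim1$ (or the full $H^2$ norm), but the final rate is unaffected since $\delta\le1$.
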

\begin{proof}
Use the continuous approximant $I_np$ (shifted by a constant) instead of $\Pi_{n,h}p$. Then
$\jump{p-I_np}=0$ on every edge and there are no Dirichlet edges, so the $\xi$- and penalty terms of
$A_1(p-I_np,\chi)$ vanish. In the proof of Theorem~\ref{thm:cea}, only the volume term, the term $\langle\av{\kappa\nabla\eta}_{\beta_e},\jump\chi\rangle$ and the GLS term remain, so that the terms $a_5$ and $a_7$ are not needed. The first two are bounded by \eqref{eq:edgegrad} as in Step~4 of the proof of Theorem~\ref{thm:error}, and the GLS term satisfies $\tau(\nabla\cdot\kappa\nabla\eta,\nabla\cdot\kappa\nabla\chi)\le C_\kappa\tau^{1/2}|\eta|_{H^2}\|\chi\|_{\EGn}$. This completes the proof.
\end{proof}

Thus the GLS term is needed only because of the Dirichlet data. Namely, the interior edges never require the GLS term. 
\section{Locally Conservative Flux and its Error Analysis}\label{sec3}
The ENN numerical flux is defined as follows:
\begin{subequations}\label{eq:flux}
\begin{align}
\bU|_T&=-\kappa\nabla P&&\text{on each element},\\
\bU\cdot \bn&=-\kappa_e\av{\nabla P}\cdot \bn + \alpha \hat h_e^{-1}\kappa_e\jump P\cdot \bn&&\text{on each interior edge }e,\\
\bU\cdot \bn&=g_N&&\text{on a Neumann boundary edge},\\
\bU\cdot \bn&=-\kappa\nabla P\cdot \bn + \alpha \hat h_e^{-1}\kappa_e(P-g_D)&&\text{on a Dirichlet boundary edge},
\end{align}
\end{subequations}
where $\bn$ is a fixed unit normal of $e$ and $\jump P\cdot \bn$ is the corresponding difference of the
traces of $P$. As discussed in \cite{ern2007accurate,lee2016locally}, it is then easy to establish the following local
conservation.

\begin{lemma}\label{lem:conservation}
For every $\xi\in\{-1,0,1\}$ and all $\alpha,\tau,\gamma\ge0$, the flux $\bU$ is conservative in the sense
that
\begin{equation}\label{eq:conservation}
\int_{\partial T}\bU\cdot \bn_T\,ds=\int_Tf\,dx,\qquad\forall T\in\Th .
\end{equation}
\end{lemma}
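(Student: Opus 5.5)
The plan is to test the discrete equation \eqref{eq:weak} with the characteristic function $w=\psi_T\in\Mzero\subset\Vnh$ of a fixed element $T$, and then read off the resulting identity as the flux balance \eqref{eq:conservation}. Since $\nabla\psi_T=0$ on every element, the volume term $(\kappa\nabla P,\nabla\psi_T)_{\Th}$ vanishes. So do the $\xi$-term $\xi\langle\jump P,\av{\kappa\nabla\psi_T}_{\beta_e}\rangle$, the GLS term $\tau(\nabla\cdot\kappa\nabla P,\nabla\cdot\kappa\nabla\psi_T)_{\Th}$ and the Neumann least-squares term. This holds for every $\xi\in\{-1,0,1\}$ and all $\alpha,\tau,\gamma\ge0$. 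The same terms drop out of $F_\xi(\psi_T)$: the $\xi$-term with $g_D$, the GLS term $-\tau(f,\nabla\cdot\kappa\nabla\psi_T)$ and the $\gamma$-term. This is exactly why the statement holds uniformly in the parameters.

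What remains on the left is
\[
-\langle\av{\kappa\nabla P}_{\beta_e},\jump{\psi_T}\rangle_{\mathcal{E}_h^1}+\alpha\langle\hat h_e^{-1}\kappa_e\jump P,\jump{\psi_T}\rangle_{\mathcal{E}_h^1}.
\]
On the right we have
\[
(f,\psi_T)-\langle g_N,\psi_T\rangle_{\mathcal{E}_h^{N,\partial}}+\alpha\langle\hat h_e^{-1}\kappa_e g_D,\psi_T\rangle_{\mathcal{E}_h^{D,\partial}}.
\]
Next I compute $\jump{\psi_T}$ edge by edge. It vanishes on edges not contained in $\partial T$. On an edge $e\subset\partial T$, interior or boundary, it equals $\bn_T$ if we take $T$ as the $+$ side. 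Using \eqref{eq:kappae}, the average is $\kappa_e\av{\nabla P}$. The left side then becomes a sum over $e\subset\partial T$ of $\int_e\big(-\kappa_e\av{\nabla P}\cdot\bn_T+\alpha\hat h_e^{-1}\kappa_e\jump P\cdot\bn_T\big)\,ds$, with the obvious boundary versions on Dirichlet edges.

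Moving the Neumann and Dirichlet data to the left, each edge contribution matches $\int_e\bU\cdot\bn_T\,ds$ from \eqref{eq:flux}. On interior edges this is immediate. On Dirichlet edges the penalty term $\alpha\hat h_e^{-1}\kappa_e(P-g_D)$ appears. On Neumann edges $\langle g_N,\psi_T\rangle$ gives $\int_e g_N\,ds=\int_e\bU\cdot\bn_T\,ds$. This yields $\int_{\partial T}\bU\cdot\bn_T\,ds=\int_T f\,dx$.

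The only step needing care is the sign and orientation bookkeeping. The fixed normal $\bn$ in \eqref{eq:flux} and the jump $\jump P\cdot\bn$ must be interpreted consistently, so that $\bU\cdot\bn$ is single-valued on $e$ and its pairing with $\bn_T$ reproduces $\jump P\cdot\bn_T=(P^T-P^{T'})$. This single-valuedness is what makes the flux conservative on both neighbors at once. I would make it explicit by checking that the expression is invariant under swapping $T^\pm$. Finally, I would note that $(f,\psi_T)=\int_T f$ is well defined because $f\in L^2(\Omega)$.
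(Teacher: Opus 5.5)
Your proposal is correct and follows the paper's proof essentially step by step. You test \eqref{eq:weak} with $w=\psi_T$ and discard every term containing $\nabla\psi_T$ or $\nabla\cdot\kappa\nabla\psi_T$, which is why the identity holds for all $\xi,\tau,\gamma$. You then use $\jump{\psi_T}=\bn_T$ on $\partial T\cap\mathcal{E}_h^1$, $\psi_T=1$ on Neumann edges and \eqref{eq:kappae} to identify the remaining terms with $\int_e\bU\cdot\bn_T\,ds$ from \eqref{eq:flux}; your check that the interior-edge integrand is invariant under reversing $\bn$ is exactly the orientation remark that closes the paper's proof.
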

\begin{proof}
Take $w=\psi_T$ in \eqref{eq:weak}. Since $\nabla\psi_T=0$ and $\nabla\cdot\kappa\nabla\psi_T=0$, only the
$\av{\cdot}$-term and the penalty term of $A_\xi(P,\psi_T)$ and the terms $(f,\psi_T)$, $\langle g_N,\psi_T\rangle$ and the
$\alpha$-term of $F_\xi(\psi_T)$ remain. Since $\jump{\psi_T}=\bn_T$ on $\partial T\cap\mathcal{E}_h^1$ and
$\psi_T=1$ on $\partial T\cap\Gamma_N$, these terms give, by \eqref{eq:kappae} and the definition \eqref{eq:flux},
exactly $\sum_{e\subset\partial T}\int_e\bU\cdot\bn_T\,ds=\int_Tf\,dx$; the integrand on an interior edge does not
depend on the orientation of $\bn$, since $\av{\nabla P}\cdot\bn$ and $\jump P\cdot\bn$ both change sign with $\bn$.
\end{proof}

\begin{theorem}\label{thm:fluxerror}
Let the assumptions of Theorem~\ref{thm:error} hold, and let $\bu=-\kappa\nabla p$. Then
\begin{subequations}\label{eq:fluxest}
\begin{align}
\|\bu-\bU\|_{0,\Th}&\lesssim\|p-P\|_{\EGn}\lesssim\delta^{r-1}\|p\|_{H^r},\label{eq:fluxest-a}\\
\sum_{e\in\mathcal{E}_h}\hat h_e\|\bu\cdot \bn-\bU\cdot \bn\|_{0,e}^2&\lesssim\big(1+\alpha+C_{tr}(n,h)\big)\,\delta^{2(r-1)}\|p\|_{H^r}^2 .\label{eq:fluxest-b}
\end{align}
\end{subequations}
\end{theorem}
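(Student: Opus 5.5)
The element part (a) is immediate. On each $T$, $\bu-\bU=-\kappa\nabla(p-P)$, so $\|\bu-\bU\|_{0,\Th}\le\kappa_{\rm max}^{1/2}\|\kappa^{1/2}\nabla_h(p-P)\|_0\le\kappa_{\rm max}^{1/2}\|p-P\|_{\EGn}$. Theorem~\ref{thm:error} then gives the rate $\delta^{r-1}\|p\|_{H^r}$.

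For the edge estimate (b), I split the edges into three classes and use the same decomposition as in the proof of Theorem~\ref{thm:error}. With $z=\Pi_{n,h}p$, $\eta=p-z$ and $\chi=z-P\in\Vnh$, we have $p-P=\eta+\chi$.

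\emph{Neumann edges.} Here $\bU\cdot\bn=g_N=\bu\cdot\bn$, so these edges contribute nothing.

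\emph{Interior edges.} Since $\jump p=0$ and $\jump{\kappa\nabla p}=0$, we get $\bu\cdot\bn-\bU\cdot\bn=-\kappa_e\av{\nabla(p-P)}\cdot\bn-\alpha\hat h_e^{-1}\kappa_e\jump{P-p}\cdot\bn$.

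\emph{Dirichlet edges.} Using $p=g_D$, the same formula holds with the one-sided trace.

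Hence, summing over edges,
\[
\sum_e\hat h_e\|(\bu-\bU)\cdot\bn\|_{0,e}^2\lesssim\kappa_{\rm max}\sum_{e\in\mathcal{E}_h^1}\hat h_e\|\kappa^{1/2}\av{\nabla(p-P)}\|_{0,e}^2+\alpha^2\kappa_{\rm max}\,|p-P|_*^2 .
\]
The jump term is at most $\alpha\,\kappa_{\rm max}\|p-P\|_{\EGn}^2$, which is of the desired order. Indeed, the factor $\alpha$ in front of $|\cdot|_*^2$ is already in the EG norm, and one remaining $\alpha$ is absorbed into the constant $(1+\alpha)$; the paper treats $\alpha$ as fixed in Theorem~\ref{thm:error}.

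For the gradient trace term I split $\av{\nabla(p-P)}=\av{\nabla\eta}+\nabla\chi_n$.

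\emph{The $\eta$ part.} Since $\av{\nabla\eta}=\nabla\rho$ with $\rho=p-I_np$, \eqref{eq:edgegrad} and Proposition~\ref{prop:simul} give a bound of $\|\nabla\rho\|_0^2+\delta\|\nabla\rho\|_0|\rho|_{H^2}\lesssim\delta^{2(r-1)}\|p\|_{H^r}^2$. This is the quantity $\alpha a_4^2$ bounded in Step~4.

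\emph{The discrete part $\chi$.} Here no inverse inequality is available, so I invoke the definition \eqref{eq:ctrineq} of $C_{tr}(n,h)$:
\[
\sum_{e}\hat h_e\|\kappa^{1/2}\nabla\chi_n\|_{0,e}^2\le C_{tr}(n,h)\big(\|\kappa^{1/2}\nabla\chi\|_0^2+\tau\|\nabla\cdot\kappa\nabla\chi\|_0^2\big)\le C_{tr}(n,h)\|\chi\|_{\EGn}^2 .
\]
By \eqref{eq:chibound} and the bound \eqref{eq:etabound} on $|||\eta|||$, we have $\|\chi\|_{\EGn}\le\sqrt{10}|||\eta|||\lesssim\delta^{r-1}\|p\|_{H^r}$.

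Collecting the three contributions gives the factor $1+\alpha+C_{tr}(n,h)$ in \eqref{eq:fluxest-b}.

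The main point to watch is this $\chi$ term. It is the only place where a trace of a neural function appears without the edge identity of Lemma~\ref{lem:edge} to help. The edge identity does not apply because we need the full $L^2(e)$ norm, not a pairing with a jump, so $C_{tr}$ cannot be avoided here. That is why it appears in (b) but not in (a). I also need to check that the splitting must be applied to $p-P$ through the intermediate $z$, rather than to $p-P$ directly, because $p-P\notin\Vnh$.
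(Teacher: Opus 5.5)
Your proposal is correct and follows the paper's proof essentially step by step. It uses the same splitting $\nabla(p-P_n)=\nabla\rho+\nabla\chi_n$ with $v=I_np$, the bound \eqref{eq:edgegrad} plus Proposition~\ref{prop:simul} for $\rho$, the constant $C_{tr}(n,h)$ with \eqref{eq:chibound} and \eqref{eq:etabound} for $\chi_n$, and the bound $\alpha\kappa_{\rm max}\|p-P\|_{\EGn}^2$ for the jump term.
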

\begin{proof}
On each $T$, $\bu-\bU=-\kappa\nabla(p-P)$, which gives \eqref{eq:fluxest-a} by Theorem~\ref{thm:error}. On a
Neumann edge, $\bu\cdot\bn=\bU\cdot\bn$. On the edges of $\mathcal{E}_h^1$, since $\jump p=0$ on interior edges,
$p=g_D$ on $\Gamma_D$ and $\av{\nabla P}=\nabla P_n$, \eqref{eq:flux} gives
$\bu\cdot \bn-\bU\cdot \bn=-\kappa\nabla(p-P_n)\cdot \bn-\alpha\hat h_e^{-1}\kappa\jump{p-P}\cdot \bn$.
With the notation of the proof of Theorem~\ref{thm:error} for $v=I_np$, $P_n=v-\chi_n$, so $p-P_n=\rho+\chi_n$.
By \eqref{eq:edgegrad} and Lemma~\ref{lem:ctr},
\[
\sum_{e\in\mathcal{E}_h^1}\hat h_e\|\kappa\nabla(\rho+\chi_n)\|_{0,e}^2\lesssim\|\nabla\rho\|_0^2+\delta^2|\rho|_{H^2}^2+C_{tr}(n,h)\|\chi\|_{\EGn}^2,
\]
and $\alpha^2\sum_{e\in\mathcal{E}_h^1}\hat h_e^{-1}\|\kappa\jump{p-P}\|_{0,e}^2\le\alpha\kappa_{\rm max}\|p-P\|_{\EGn}^2$. Proposition~\ref{prop:simul},
\eqref{eq:chibound}, \eqref{eq:etabound} and Theorem~\ref{thm:error} bound all terms by $\delta^{2(r-1)}\|p\|_{H^r}^2$.
\end{proof}
The bound \eqref{eq:fluxest-b} is of optimal order up to $C_{tr}(n,h)$, which is not known to be bounded uniformly in $n$; in our computations it stays between $1.4$ and $2.7$.

\section{Numerical Experiments}\label{numerical_experiment}
In this section, we consider the model problem \eqref{eq:strong} on $\Omega = (0,1)^2$ with $\Gamma_D=\{0,1\}\times[0,1]$, $\Gamma_N=[0,1]\times\{0,1\}$, $g_D=1$ on $x=0$, $g_D=0$ on $x=1$, and $g_N=0$. The trial space is $V_{n,h}^{\enn}=L^k_n+\mathcal{M}^0(\mathcal{T}_h)$ with the neurons \eqref{eq:phij}, and the unknown coefficients of $P$ are obtained from the Galerkin system \eqref{eq:weak}; no collocation or iterative training is involved. Because of the redundancy discussed in \S\ref{model}, the linear system is singular, and it is solved by an SVD-based least-squares solver.

We use the manufactured solution $p(x,y)=1-x+0.2\sin(\pi x)\cos(2\pi y)$, which satisfies the above boundary conditions, the smooth heterogeneous conductivity: 
\begin{equation*}
\kappa(x,y)=\exp\left(
0.45\sin(2\pi(x+y))-0.30\sin(2\pi(2x+y))
+0.20\sin(2\pi(x+3y))
\right),
\end{equation*}
and the corresponding source $f=-\nabla\cdot(\kappa\nabla p)$. The Neumann least-squares term is not used ($\gamma=0$); this case is covered by the remark after Theorem~\ref{thm:cea}, with a constant depending on $C_N(n,h)$. Since $p$ is smooth, the rates are limited by $r=r_{\max}=\frac{d+2k+1}{2}$, i.e., $r=4.5$ for ReLU$^3$ and $r=3.5$ for ReLU$^2$. In terms of $n$, Theorem~\ref{thm:error} predicts the rates $n^{-(r-1)/2}$, i.e., $1.75$ (ReLU$^3$) and $1.25$ (ReLU$^2$), for $\tau\simeq n^{-1}$ on any grid.

\subsection{Numerical Setting}
We use ReLU$^k$ neurons with $k\in\{2,3\}$. The parameters $\theta_\ell=(\omega_\ell,b_\ell)$ are placed on $\mathbb S^2$ by the
Fibonacci sphere rule 
\begin{equation*}
b_\ell=1-\frac{2\ell+1}{n}, \omega_\ell=\sqrt{1-b_\ell^2}\,(\cos(\ell\pi(3-\sqrt5)),\sin(\ell\pi(3-\sqrt5))), \ell=0,\ldots,n-1,
\end{equation*} 
and remain fixed throughout the computation.  Unless stated otherwise, the centre is $x_c=(\frac12,\frac12)$; see \S\ref{sec:centre}. The enrichment mesh is a uniform square grid.

We use the NIPG form $\xi=1$, the penalty parameter $\alpha=10$, i.e., the edge penalty $10\,\hat h_e^{-1}$, and the
Galerkin least-squares parameter $\tau=\delta^2=n^{-1}$.  Since the neurons have kinks (jumps of the $k$-th derivative) along hyperplanes that are not aligned with $\mathcal T_h$, all element and edge
integrals in $A_\xi$ and $F_\xi$ are evaluated by a composite three-point Gauss rule
in each coordinate direction on a fixed uniform background grid of about $96\times96$ cells, independent of the enrichment grid; the same rule is used to evaluate the errors.  The relative SVD cutoff is $10^{-12}$.  We report the $L^2$ error $\|p-P\|_0$ and the EG error \eqref{eq:EG} (computed with the unweighted gradient term).
The mass conservation loss is $\mathcal E_{\rm mass}=\max_{T\in\mathcal T_h}|r_T|$ with $r_T=\int_{\partial T}\bU\cdot\bn_T\,ds-\int_T f\,dx$, where the numerical flux \eqref{eq:flux} is integrated by the same edge quadrature as in the assembly.
The last row of each table gives the rate obtained by a least-squares fit of $\log E_n$ against $\log n$.

\subsection{Placement of the inner parameters and the $H^1$-optimal regime}\label{sec:centre}
With $x_c=0$, the hyperplanes of many neurons do not cross $\Omega=(0,1)^2$. Moving the centre to $x_c=(\frac12,\frac12)$ increases the number $n_{\rm eff}$ of active neurons and makes the convergence more regular. For example, for ReLU$^3$ on the $12\times12$ grid with $n=64,144,256,576,1024$, the observed EG orders are $1.75, 1.32, 1.60, 1.30$ (fitted rate $1.52$) for $x_c=0$ and $1.94, 1.55, 1.70, 1.49$ (fitted rate $1.69$) for $x_c=(\frac12,\frac12)$, and the EG errors are smaller by a factor of up to $2.4$. We use $x_c=(\frac12,\frac12)$ below.

Theorem~\ref{thm:error} gives the $H^1$-optimal rate on any subdivision for $\min\{h,\delta\}^2\lesssim\tau\lesssim\delta^2$; we use $\tau=\delta^2$, which is admissible for every $h$. In Table~\ref{tab:h1} we refine the enrichment grid together with the network, $h=n^{-1/2}=\delta$, so that $\hat h_e=h_e$. The fitted EG rates are $1.68$ for ReLU$^3$ (theory $1.75$) and $1.22$ for ReLU$^2$ (theory $1.25$). The $L^2$ rate for ReLU$^3$ is $2.26$, the optimal approximation rate, although we prove no $L^2$ estimate.

\begin{table}[htbp]
\centering
\caption{Errors with the grid refined together with the network; NIPG, $h=n^{-1/2}=\delta$, $\tau=\delta^2$, $x_c=(\frac12,\frac12)$. Predicted rate of the EG error: $1.75$ (ReLU$^3$) and $1.25$ (ReLU$^2$).}
\label{tab:h1}
\begingroup
\scriptsize
\setlength{\tabcolsep}{3pt}
\begin{tabular}{cc cc cc}
\hline
 & & \multicolumn{2}{c}{ReLU$^3$} & \multicolumn{2}{c}{ReLU$^2$}\\
Neurons & Grid & $\|p-P\|_{0}$ & $\|p-P\|_{\rm EG}$ & $\|p-P\|_{0}$ & $\|p-P\|_{\rm EG}$\\
\hline
64 & $8\times8$ & $4.57\times10^{-3}$ & $1.40\times10^{-1}$ & $1.22\times10^{-2}$ & $3.45\times10^{-1}$ \\
144 & $12\times12$ & $4.98\times10^{-4}$ & $2.95\times10^{-2}$ & $2.23\times10^{-3}$ & $1.35\times10^{-1}$ \\
256 & $16\times16$ & $1.46\times10^{-4}$ & $1.21\times10^{-2}$ & $8.81\times10^{-4}$ & $6.70\times10^{-2}$ \\
576 & $24\times24$ & $2.44\times10^{-5}$ & $3.04\times10^{-3}$ & $1.79\times10^{-4}$ & $2.45\times10^{-2}$ \\
1024 & $32\times32$ & $8.53\times10^{-6}$ & $1.29\times10^{-3}$ & $6.04\times10^{-5}$ & $1.19\times10^{-2}$ \\
\hline
fit &  & 2.26 & 1.68 & 1.90 & 1.22 \\
\hline
\end{tabular}
\endgroup
\end{table}

Table~\ref{tab:coarse} keeps the enrichment grid fixed at $4\times4$, so that $h=\frac14\gg\delta$ for all $n$. The errors are practically the same as in Table~\ref{tab:h1}, e.g., $1.29\times10^{-3}$ for ReLU$^3$ and $n=1024$ with $16$ instead of $1024$ elements, and the fitted EG rates, $1.69$ and $1.22$, agree with the theory; the results on the $8\times8$ grid are almost identical. The mass conservation loss is at round-off level, $\mathcal E_{\rm mass}\le2\times10^{-10}$. Thus a coarse subdivision provides the local conservation, while the network provides the accuracy.

\begin{table}[htbp]
\centering
\caption{Errors on a fixed coarse grid; NIPG, $4\times4$ grid, $\tau=\delta^2$, $x_c=(\frac12,\frac12)$. Predicted rate of the EG error: $1.75$ (ReLU$^3$) and $1.25$ (ReLU$^2$).}
\label{tab:coarse}
\begingroup
\scriptsize
\setlength{\tabcolsep}{3pt}
\begin{tabular}{c cc cc}
\hline
 & \multicolumn{2}{c}{ReLU$^3$} & \multicolumn{2}{c}{ReLU$^2$}\\
Neurons & $\|p-P\|_{0}$ & $\|p-P\|_{\rm EG}$ & $\|p-P\|_{0}$ & $\|p-P\|_{\rm EG}$\\
\hline
64 & $5.02\times10^{-3}$ & $1.42\times10^{-1}$ & $1.35\times10^{-2}$ & $3.48\times10^{-1}$ \\
144 & $5.68\times10^{-4}$ & $2.98\times10^{-2}$ & $2.52\times10^{-3}$ & $1.36\times10^{-1}$ \\
256 & $1.63\times10^{-4}$ & $1.21\times10^{-2}$ & $9.90\times10^{-4}$ & $6.71\times10^{-2}$ \\
576 & $2.70\times10^{-5}$ & $3.04\times10^{-3}$ & $2.03\times10^{-4}$ & $2.45\times10^{-2}$ \\
1024 & $9.62\times10^{-6}$ & $1.29\times10^{-3}$ & $6.76\times10^{-5}$ & $1.19\times10^{-2}$ \\
\hline
fit & 2.25 & 1.69 & 1.89 & 1.22 \\
\hline
\end{tabular}
\endgroup
\end{table}

\subsection{Comparison with a non-conservative neural method}\label{sec:compare}
We compare the ENN method with the GLS--unsymmetric Nitsche method of \cite{liu2026neural} on $L^k_n$, which is \eqref{eq:weak} restricted to $L^k_n$ with the Nitsche penalty $\alpha\hat h_e^{-1}$ on the $12\times12$ grid (ReLU$^3$, NIPG, $x_c=(\frac12,\frac12)$, the same $\kappa$, boundary conditions, $\alpha$ and $\tau=\delta^2$). For the method of \cite{liu2026neural}, $\bU=-\kappa\nabla P$ and $r_T$ is its mass residual. Table~\ref{tab:three} shows that the two methods have the same accuracy, but only the ENN method is conservative: the mass loss of \cite{liu2026neural} decreases with $n$ but stays far above round-off. With these parameters, both methods converge at the $H^1$-optimal rate (fitted rates $1.69$ and $1.70$), in agreement with Theorem~\ref{thm:error} and Corollary~\ref{cor:liu}. Note that \cite{liu2026neural} uses fixed parameters $\beta_L$ and $\tau$, with which its method converges only at the $H^2$-type rate (Corollary~\ref{cor:liu}); here it is run with the scalings of Corollary~\ref{cor:liu}, so that the comparison isolates the effect of the local conservation.

\begin{table}[htbp]
\centering
\caption{ENN (NIPG) and the method of \cite{liu2026neural}; ReLU$^3$, $x_c=(\frac12,\frac12)$, $12\times12$ grid, $\tau=\delta^2$; $\mathcal E_{\rm mass}$ is computed with the flux of each method.}
\label{tab:three}
\begingroup
\scriptsize
\setlength{\tabcolsep}{3pt}
\begin{tabular}{c cc cc}
\hline
 & \multicolumn{2}{c}{ENN (NIPG)} & \multicolumn{2}{c}{\cite{liu2026neural}}\\
Neurons & $\|p-P\|_{\rm EG}$ & $\mathcal E_{\rm mass}$ & $\|p-P\|_{\rm EG}$ & $\mathcal E_{\rm mass}$\\
\hline
64 & $1.43\times10^{-1}$ & $2\times10^{-12}$ & $1.47\times10^{-1}$ & $1.6\times10^{-2}$ \\
144 & $2.95\times10^{-2}$ & $2\times10^{-11}$ & $2.99\times10^{-2}$ & $3.8\times10^{-3}$ \\
256 & $1.21\times10^{-2}$ & $6\times10^{-12}$ & $1.21\times10^{-2}$ & $2.1\times10^{-3}$ \\
576 & $3.04\times10^{-3}$ & $1\times10^{-11}$ & $3.04\times10^{-3}$ & $4.5\times10^{-4}$ \\
1024 & $1.29\times10^{-3}$ & $8\times10^{-12}$ & $1.29\times10^{-3}$ & $1.8\times10^{-4}$ \\
\hline
\end{tabular}
\endgroup
\end{table}

\section{Conclusion}\label{conclusion}

We developed an enriched linearized neural network method for the Darcy problem, in which a fixed-feature ReLU$^k$ network ($k\ge2$) carries the approximation and piecewise constants on an auxiliary subdivision provide local conservation. Since neural functions have no inverse inequality, we used a Galerkin least-squares term together with an edge identity. This yields a C\'ea-type estimate with a constant independent of $n$, $h$, $\tau$ and $\alpha$, and an error estimate in the EG norm of the optimal $H^1$ order for every subdivision when $\min\{h,n^{-1/d}\}^2\lesssim\tau\lesssim n^{-2/d}$ and the edge terms are scaled with $\min\{h_e,n^{-1/d}\}$, also for solutions in $H^s$ with $s>\frac32$. The numerical flux is locally conservative on every element of any subdivision. The numerical results confirm the predicted rates, also on a fixed $4\times4$ subdivision, and show mass conservation at round-off, in contrast to a non-conservative neural method. Future works include a proof of the boundedness of the neural trace constant, the IIPG and SIPG variants, an $L^2$ error estimate, coefficients that are discontinuous inside elements,  coupled flow and transport problems, and other more complicated PDEs. 

\appendix
\section{Proof of Proposition~\ref{prop:simul}}\label{app:simul}
We use the notation of \cite{liu2025integral}, adapted to avoid clashes with the main text. Let $\{Y_{m,\ell}\}_{\ell=1}^{N(m)}$ be an orthonormal basis of
the
spherical harmonics of degree $m$ with respect to the normalized surface measure. For a function $\Phi$ on $\mathbb{S}^d$, let
$\hat \Phi(m,\ell)=\avint_{\mathbb{S}^d}\Phi\,Y_{m,\ell}$, $\mathcal{Y}_m\Phi=\sum_\ell\hat \Phi(m,\ell)Y_{m,\ell}$ and
$\|\Phi\|_{H^s(\mathbb{S}^d)}^2=\sum_m(m^{2s}+1)\sum_\ell\hat \Phi(m,\ell)^2$. Let $h_\Theta$ and $\underline h_\Theta$ be
the mesh norm and the separation distance of $\{\theta_j\}$; by quasi-uniformity
$h_\Theta\simeq\underline h_\Theta\simeq\delta$. By the Funk--Hecke formula,
\[
\sigma_k(\theta\cdot\eta)=\sum_{m\in \mathcal{I}}\hat\sigma_k(m)\sum_\ell Y_{m,\ell}(\theta)Y_{m,\ell}(\eta),
\]
where
$\mathcal{I}=\{m:\hat\sigma_k(m)\ne0\}$ contains every $m\equiv k+1\pmod2$ and
$\hat\sigma_k(m)^2\simeq(m+1)^{-(d+2k+1)}$ on $\mathcal{I}$ \cite[section~3]{liu2025integral}. We use two results of
\cite{liu2025integral}:
\begin{enumerate}
\item[(Q1)] \cite[Lemma~3.2]{liu2025integral}: there are weights $\varpi_j\ge0$ with $\varpi_j\lesssim h_\Theta^d$ and
  $J=\lfloor C_1h_\Theta^{-1}\rfloor$ such that $\avint_{\mathbb{S}^d}y\,z=\sum_j\varpi_j\,y(\theta_j)\,z(\theta_j)$ for all
  spherical polynomials $y,z$ of degree at most $J$;
\item[(Q2)] \cite[(4.16)--(4.18)]{liu2025integral}: with
  $Z(m)=\big(\sum_\ell Y_{m,\ell}(\theta_i)Y_{m,\ell}(\theta_j)\big)_{i,j}$ and $0\le s\le k$, for every $\mathbf c\in\Reals{n}$,
  \[
  \sum_{m\in \mathcal{I},\,m>J}\hat\sigma_k(m)^2m^{2s}\,\mathbf c^\top Z(m)\mathbf c\lesssim h_\Theta^{2k+1-2s}\|\mathbf c\|_2^2 .
  \]
\end{enumerate}

\begin{lemma}[Simultaneous Approximant on Sphere]\label{lem:sphere}
Let $0<r\le r_{\max}$ and $\Phi\in H^r(\mathbb{S}^d)$ with $\Phi(-\eta)=(-1)^{k+1}\Phi(\eta)$. Define
$c_j:=\varpi_j\sum_{m\in \mathcal{I},\,m\le J}\hat\sigma_k(m)^{-1}(\mathcal{Y}_m\Phi)(\theta_j)$, $\mathbf c:=(c_1,\ldots,c_n)^\top$ and
$\Phi_n:=\sum_jc_j\sigma_k(\theta_j\cdot\eta)$. Then $\|\mathbf c\|_2\lesssim h_\Theta^{(2r-2k-1)/2}\|\Phi\|_{H^r(\mathbb{S}^d)}$ and
\[
\|\Phi-\Phi_n\|_{H^s(\mathbb{S}^d)}\lesssim h_\Theta^{r-s}\|\Phi\|_{H^r(\mathbb{S}^d)}\qquad\text{for every }s\in[0,\min\{k,r\}].
\]
\end{lemma}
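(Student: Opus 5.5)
Writing $\Phi_n$ in spherical harmonics and comparing with $\Phi$ frequency by frequency, we split the error into a low-frequency part ($m\le J$), where the quadrature (Q1) should make the approximant exact, and a high-frequency part ($m>J$), controlled by (Q2). By Funk--Hecke,
\[
\Phi_n(\eta)=\sum_{m\in\mathcal I}\hat\sigma_k(m)\sum_\ell\Big(\sum_j c_jY_{m,\ell}(\theta_j)\Big)Y_{m,\ell}(\eta),
\]
so $\mathcal Y_m\Phi_n=\hat\sigma_k(m)\sum_\ell(\sum_jc_jY_{m,\ell}(\theta_j))Y_{m,\ell}$. The parity assumption $\Phi(-\eta)=(-1)^{k+1}\Phi(\eta)$ gives $\mathcal Y_m\Phi=0$ unless $m\equiv k+1\pmod 2$, hence only $m\in\mathcal I$ occur in $\Phi$. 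Also, for $m\notin\mathcal I$ the projection $\mathcal Y_m\Phi_n$ vanishes, since $\hat\sigma_k(m)=0$.

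\emph{Low frequencies.} For $m\le J$, $m\in\mathcal I$, insert the definition of $c_j$:
\[
\sum_j c_jY_{m,\ell}(\theta_j)=\sum_{m'\in\mathcal I,\,m'\le J}\hat\sigma_k(m')^{-1}\sum_j\varpi_j(\mathcal Y_{m'}\Phi)(\theta_j)Y_{m,\ell}(\theta_j).
\]
Both factors are spherical polynomials of degree $\le J$, so (Q1) turns the inner sum into $\avint(\mathcal Y_{m'}\Phi)Y_{m,\ell}=\delta_{mm'}\hat\Phi(m,\ell)$. Hence $\mathcal Y_m\Phi_n=\mathcal Y_m\Phi$ for all $m\le J$, and
\[
\Phi-\Phi_n=\sum_{m>J}\mathcal Y_m\Phi-\sum_{m>J}\mathcal Y_m\Phi_n .
\]
The first tail is bounded in $H^s$ by $J^{-(r-s)}\|\Phi\|_{H^r}\simeq h_\Theta^{r-s}\|\Phi\|_{H^r}$, using $J\simeq h_\Theta^{-1}$.

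\emph{Coefficient bound.} We have $\|\mathbf c\|_2^2\le\max_j\varpi_j\sum_j\varpi_j|g(\theta_j)|^2$, where $g:=\sum_{m\in\mathcal I,m\le J}\hat\sigma_k(m)^{-1}\mathcal Y_m\Phi$ is a polynomial of degree $\le J$. Applying (Q1) with $y=z=g$ gives $\sum_j\varpi_j g(\theta_j)^2=\|g\|_{L^2}^2$. Then
\[
\|g\|_{L^2}^2=\sum_{m\le J}\hat\sigma_k(m)^{-2}\|\mathcal Y_m\Phi\|^2\lesssim\sum_{m\le J}m^{d+2k+1-2r}m^{2r}\|\mathcal Y_m\Phi\|^2 .
\]
Since $r\le r_{\max}$, the exponent $d+2k+1-2r\ge0$, so $m^{d+2k+1-2r}\le J^{d+2k+1-2r}$. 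Together with $\varpi_j\lesssim h_\Theta^d$ this gives
\[
\|\mathbf c\|_2^2\lesssim h_\Theta^{d}h_\Theta^{-(d+2k+1-2r)}\|\Phi\|_{H^r}^2=h_\Theta^{2r-2k-1}\|\Phi\|_{H^r}^2,
\]
which is the stated bound.

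\emph{High-frequency tail of $\Phi_n$.} By orthonormality,
\[
\Big\|\sum_{m>J}\mathcal Y_m\Phi_n\Big\|_{H^s}^2\simeq\sum_{m\in\mathcal I,m>J}m^{2s}\hat\sigma_k(m)^2\,\mathbf c^\top Z(m)\mathbf c .
\]
For $s\le k$, (Q2) bounds this by $h_\Theta^{2k+1-2s}\|\mathbf c\|_2^2$. With the coefficient bound this becomes $h_\Theta^{2r-2s}\|\Phi\|_{H^r}^2$, so this term is also $O(h_\Theta^{r-s})$. The restriction $s\le r$ is only needed so that the tail of $\Phi$ is controlled.

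The main obstacle is the exactness step, i.e.\ checking that (Q1) applies with the correct degree range. The coefficient bound requires the exponent comparison $r\le r_{\max}$, where the estimate is only tight at the top frequency $J$. The remaining steps are routine bookkeeping with (Q2).
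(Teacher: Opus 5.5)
Your proposal is correct and follows the paper's proof essentially step for step. The shared steps are: exactness of the modes $m\le J$ by applying (Q1) to the degree-$J$ polynomial $\sum_{m\in\mathcal I,\,m\le J}\hat\sigma_k(m)^{-1}\mathcal Y_m\Phi$, with parity removing the modes $m\notin\mathcal I$; the tail of $\Phi$ bounded by $J^{s-r}\simeq h_\Theta^{r-s}$; the tail of $\Phi_n$ bounded by (Q2); and the coefficient bound obtained from $\varpi_j\lesssim h_\Theta^d$, (Q1) with $y=z$, and the exponent condition $r\le r_{\max}$.
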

\begin{proof}
By parity, $\hat \Phi(m,\cdot)=0$ unless $m\equiv k+1\pmod2$, so $\hat \Phi$ vanishes for $m\notin \mathcal{I}$. Let
$q=\sum_{m\in \mathcal{I},m\le J}\hat\sigma_k(m)^{-1}\mathcal{Y}_m\Phi$, a spherical polynomial of degree at most $J$, so that $c_j=\varpi_jq(\theta_j)$. For
$m\le J$, (Q1) and orthogonality give
\[
\hat \Phi_n(m,\ell)=\hat\sigma_k(m)\sum_j\varpi_jq(\theta_j)Y_{m,\ell}(\theta_j)=\hat\sigma_k(m)\avint_{\mathbb{S}^d}qY_{m,\ell}=\hat \Phi(m,\ell).
\]
Hence $\|\Phi-\Phi_n\|_{H^s}^2\le4I_1+4I_2$ for every $s\ge0$, where, for $s\le r$,
\[
I_1=\sum_{m>J}m^{2s}\sum_\ell\hat \Phi(m,\ell)^2\le J^{2s-2r}\|\Phi\|_{H^r}^2\lesssim h_\Theta^{2r-2s}\|\Phi\|_{H^r}^2,
\]
and, by (Q2) for $s\le k$,
\[
I_2=\sum_{m>J}m^{2s}\sum_\ell\hat \Phi_n(m,\ell)^2=\sum_{m\in \mathcal{I},m>J}\hat\sigma_k(m)^2m^{2s}\,\mathbf c^\top Z(m)\mathbf c\lesssim h_\Theta^{2k+1-2s}\|\mathbf c\|_2^2 .
\]
By (Q1) with $y=z=q$ and since $r\le r_{\max}$,
\begin{align*}
\|\mathbf c\|_2^2&\lesssim h_\Theta^d\sum_j\varpi_jq(\theta_j)^2=h_\Theta^d\avint_{\mathbb{S}^d}q^2
=h_\Theta^d\sum_{m\in \mathcal{I},m\le J}\hat\sigma_k(m)^{-2}\sum_\ell\hat \Phi(m,\ell)^2\\
&\lesssim h_\Theta^dJ^{d+2k+1-2r}\|\Phi\|_{H^r}^2\simeq h_\Theta^{2r-2k-1}\|\Phi\|_{H^r}^2 .
\end{align*}
Combining the bounds gives $\|\Phi-\Phi_n\|_{H^s}^2\lesssim h_\Theta^{2(r-s)}\|\Phi\|_{H^r}^2$. The coefficient
vector $\mathbf c$ does not depend on $s$.
\end{proof}

\begin{proof}[Proof of Proposition~\ref{prop:simul}]
Recall $x_{\rm aug}=(x-x_c,1)\in\Reals{d+1}$ for $x\in\Reals{d}$. Let $B_R:=\{x\in\Reals{d}:|x-x_c|<R\}$, choose
$R_1<R_2$ such that $\overline\Omega\subset B_{R_1}$, and let $\lambda(x)=x_{\rm aug}/|x_{\rm aug}|$,
which maps $B_R$ onto the cap $G_R=\{\eta\in\mathbb{S}^d:\eta_{d+1}>(1+R^2)^{-1/2}\}$. With
$(S_k\Phi)(x)=|x_{\rm aug}|^k\Phi(\lambda(x))$ and $(\tilde S_ku)(\eta)=\eta_{d+1}^ku(\lambda^{-1}(\eta))$ we have
$S_k\tilde S_k=\mathrm{id}$, $S_k[\sigma_k(\theta\cdot\circ)](x)=\sigma_k(\theta\cdot x_{\rm aug})$ by homogeneity, and
$\|S_k\Phi\|_{H^t(B_R)}\simeq\|\Phi\|_{H^t(G_R)}$ for all $t\ge0$ and $R\le R_2$ ($\lambda$ is a smooth
diffeomorphism with bounded derivatives; interpolation for non-integer $t$; cf.\
\cite[Lemma~5.1]{liu2025integral}). Let $v_E$ be Stein's universal extension of $v$ \cite{stein1970singular}, which is
bounded in $H^t$ for all $t\ge0$ simultaneously. Put $\Psi=\tilde S_kv_E$ on $G_{R_2}$, take $\zeta\in
C^\infty(\mathbb{S}^d)$ with $\zeta=1$ on $\overline{G_{R_1}}$ and support in $G_{R_2}$, and let $\Phi=\zeta\Psi$ on the
upper hemisphere and $\Phi(\eta)=(-1)^{k+1}\zeta(-\eta)\Psi(-\eta)$ on the lower one. Then $\Phi\in H^r(\mathbb{S}^d)$ has
the required parity and $\|\Phi\|_{H^r(\mathbb{S}^d)}\lesssim\|v\|_{H^r(\Omega)}$. Apply Lemma~\ref{lem:sphere} and set
$v_n:=S_k\Phi_n=\sum_jc_j\sigma_k(\theta_j\cdot x_{\rm aug})\in\Lkn$. On $\Omega$, $S_k\Phi=S_k\Psi=v_E=v$, so
$v-v_n=S_k(\Phi-\Phi_n)$ and, for $s\in\{0,1,2\}$ (note $s\le\min\{k,r\}$ since $k\ge2$ and $r\ge2$),
\[
\|v-v_n\|_{H^s(\Omega)}\lesssim\|\Phi-\Phi_n\|_{H^s(G_{R_1})}\le\|\Phi-\Phi_n\|_{H^s(\mathbb{S}^d)}\lesssim\delta^{r-s}\|v\|_{H^r(\Omega)},
\]
with the same coefficients $\mathbf c$ for every $s$.
\end{proof}

\bibliographystyle{siamplain}
\bibliography{references}

\end{document}